\providecommand{\U}[1]{\protect\rule{.1in}{.1in}}
\providecommand{\U}[1]{\protect\rule{.1in}{.1in}}
\providecommand{\U}[1]{\protect\rule{.1in}{.1in}}
\providecommand{\U}[1]{\protect\rule{.1in}{.1in}}
\providecommand{\U}[1]{\protect\rule{.1in}{.1in}}
\providecommand{\U}[1]{\protect\rule{.1in}{.1in}}
\providecommand{\U}[1]{\protect\rule{.1in}{.1in}}
\providecommand{\U}[1]{\protect\rule{.1in}{.1in}}
\providecommand{\U}[1]{\protect\rule{.1in}{.1in}}
\providecommand{\U}[1]{\protect\rule{.1in}{.1in}}
\providecommand{\U}[1]{\protect\rule{.1in}{.1in}}
\providecommand{\U}[1]{\protect\rule{.1in}{.1in}}
\providecommand{\U}[1]{\protect\rule{.1in}{.1in}}
\providecommand{\U}[1]{\protect\rule{.1in}{.1in}}
\providecommand{\U}[1]{\protect\rule{.1in}{.1in}}
\providecommand{\U}[1]{\protect\rule{.1in}{.1in}}
\providecommand{\U}[1]{\protect\rule{.1in}{.1in}}
\providecommand{\U}[1]{\protect\rule{.1in}{.1in}}
\providecommand{\U}[1]{\protect\rule{.1in}{.1in}}
\providecommand{\U}[1]{\protect\rule{.1in}{.1in}}
\providecommand{\U}[1]{\protect\rule{.1in}{.1in}}
\providecommand{\U}[1]{\protect\rule{.1in}{.1in}}
\providecommand{\U}[1]{\protect\rule{.1in}{.1in}}
\providecommand{\U}[1]{\protect\rule{.1in}{.1in}}
\providecommand{\U}[1]{\protect\rule{.1in}{.1in}}
\providecommand{\U}[1]{\protect\rule{.1in}{.1in}}
\providecommand{\U}[1]{\protect\rule{.1in}{.1in}}
\providecommand{\U}[1]{\protect\rule{.1in}{.1in}}
\providecommand{\U}[1]{\protect\rule{.1in}{.1in}}
\providecommand{\U}[1]{\protect\rule{.1in}{.1in}}
\providecommand{\U}[1]{\protect\rule{.1in}{.1in}}
\providecommand{\U}[1]{\protect\rule{.1in}{.1in}}
\providecommand{\U}[1]{\protect\rule{.1in}{.1in}}
\providecommand{\U}[1]{\protect\rule{.1in}{.1in}}
\providecommand{\U}[1]{\protect\rule{.1in}{.1in}}
\providecommand{\U}[1]{\protect\rule{.1in}{.1in}}
\providecommand{\U}[1]{\protect\rule{.1in}{.1in}}
\providecommand{\U}[1]{\protect\rule{.1in}{.1in}}
\providecommand{\U}[1]{\protect\rule{.1in}{.1in}}
\providecommand{\U}[1]{\protect\rule{.1in}{.1in}}
\newtheorem{theorem}{Theorem}
{}
\newtheorem{corollary}{Corollary}
\newtheorem{lemma}{Lemma}
{}
\newtheorem{proposition}{Proposition}
\newenvironment{proof}[1][Proof]{\textbf{#1.} }{\ \rule{0.5em}{0.5em}}
\begin{document}

\title{Spectral Problems of a Class of Non-self-adjoint One-dimensional Schrodinger
Operators }
\author{O. A. Veliev\\{\small Depart. of Math., Dogus University, Ac\i badem, Kadik\"{o}y, \ }\\{\small Istanbul, Turkey.}\ {\small e-mail: oveliev@dogus.edu.tr}}
\date{}
\maketitle

\begin{abstract}
In this paper we investigate the one-dimensional Schrodinger operator $L(q)$
with complex-valued periodic potential \ $q$ when $q\in L_{1}[0,1]$ \ and
$q_{n}=0$ for $n=0,-1,-2,...$, where $q_{n}$ are the Fourier coefficients of
$q$ with respect to the system $\{e^{i2\pi nx}\}.$ We prove that the Bloch
eigenvalues are $(2\pi n+t)^{2}$ for $n\in\mathbb{Z}$, $t\in\mathbb{C}$ and
find explicit formulas for \ the Bloch functions. Then we consider the inverse
problem for this operator.

Key Words: Hill operator, Spectrum, Inverse problems.

AMS Mathematics Subject Classification: 34L05, 34L20.

\end{abstract}

\section{Introduction and Preliminary Facts}

Let $L(q)$ be the operator generated in $L_{2}(-\infty,\infty)$ by the expression%

\begin{equation}
-y^{^{\prime\prime}}(x)+q(x)y(x)
\end{equation}
with a complex-valued periodic potential $q.$ In 1980, Gasymov [4] proved the
following remarkable results for the operator $L(q)$ with the potential $q$ of
the form \
\begin{equation}
q(x)=%
{\textstyle\sum\limits_{n=1}^{\infty}}
q_{n}e^{inx},
\end{equation}
where
\[%
{\textstyle\sum\limits_{n}}
\mid q_{n}\mid<\infty.
\]

\textbf{Result 1:} The spectrum $S(L(q))$ of the operator $L(q)$ is purely
continuous and
\begin{equation}
S(L(q))=[0,\infty).
\end{equation}
There may be second order spectral singularity on the continuous spectrum
which must coincide with numbers of the form $(\frac{n}{2})^{2}.$

\textbf{Result 2:} The equation%

\begin{equation}
-y^{^{\prime\prime}}(x)+q(x)y(x)=\mu^{2}y(x)
\end{equation}
has a solution of the form
\begin{equation}
f(x,\mu)=e^{i\mu x}(1+%
{\textstyle\sum\limits_{n=1}^{\infty}}
\frac{1}{n+2\mu}%
{\textstyle\sum\limits_{\alpha=n}^{\infty}}
v_{n,\alpha}e^{i\alpha x}),
\end{equation}
where the following series converge
\[%
{\textstyle\sum\limits_{n=1}^{\infty}}
\frac{1}{n}%
{\textstyle\sum\limits_{\alpha=n+1}^{\infty}}
\alpha(\alpha-n)\mid v_{n,\alpha}\mid,\text{ }%
{\textstyle\sum\limits_{n=1}^{\infty}}
n\mid v_{n,\alpha}\mid.
\]

\textbf{Result 3:} By the Floquet solutions (5) a spectral expansion was constructed.

\textbf{Result 4:} It was shown that the Wronskian of the Floquet solutions%
\[
f_{n}(x)=:\lim_{\mu\rightarrow\frac{n}{2}}(n-2\mu)f(x,-\mu)
\]
and $f(x,\frac{n}{2})$ is equal to zero and therefore they are linearly
dependent:
\begin{equation}
f_{n}(x)=s_{n}f(x,\frac{n}{2}).
\end{equation}
It was proved that from the generalized norming numbers $\{s_{n}\}$ one can
effectively reconstruct $\{q_{n}\}.$ That is, the inverse spectral problem was considered.

Guillemin and Uribe [6] investigated the boundary value problem (bvp)
generated on $[0,2\pi]$ by (1) and the periodic boundary conditions when $q\in
Q_{2}^{+},$ that is, $q\in L_{2}[0,2\pi]$ and has the form (2). It was proved
that the eigenvalues of this bvp are $n^{2}$ for $n\in\mathbb{Z}$ and the
corresponding root functions were studied. For the operator $L(q)$ with the
potential $q\in Q_{2}^{+}$ the inverse spectral problem was investigated in
detail by Pastur and Tkachenko [9] and the alternative proofs of (3) were
provided by Shin [10], Carlson [1] and Christiansen [2].

In this paper, we first prove that if $q\in L_{1}[0,1],$ $q(x+1)=q(x)$ and
\begin{equation}
\text{ }q_{n}=0,\text{ }\forall n=0,-1,-2,...,
\end{equation}
where $q_{n}=(q(x),e^{i2\pi nx})$ and $(.,.)$ is the inner product in
$L_{2}[0,1],$ then%
\begin{equation}
S(L(q))=[0,\infty),\text{ }S(L_{t}(q))=\{(2\pi n+t)^{2}:n\in\mathbb{Z}\}
\end{equation}
for all $t\in\mathbb{C},$ where $L_{t}(q)$ is the operator generated in
$L_{2}[0,1]$ by (1) and the conditions
\begin{equation}
y(1)=e^{it}y(0),\text{ }y^{^{\prime}}(1)=e^{it}y^{^{\prime}}(0).
\end{equation}
It is well-known that (see [3, 8]) the spectrum $S(L(q))$ of the operator
$L(q)$ is the union of the spectra $S(L_{t}(q))$ of the operators $L_{t}(q)$
for $t\in(-\pi,\pi].$ Thus we prove (8) for more general case and as one can
see from Theorem 1 that in a simple way. Moreover, we find explicit formulas
for the\ Bloch functions and consider the inverse problem for this general
case. The method of this paper is based on the following statements of my
paper [12]:

\textbf{ }\textit{The large eigenvalues }$\lambda_{n}(t)$\textit{ \ and the
corresponding normalized eigenfunctions }$\Psi_{n,t}(x)$\textit{ of the
operator }$L_{t}(q)$\textit{ for }$q\in L_{1}[0,1]$\textit{ and }$t\neq0,\pi
,$\textit{ satisfy the following asymptotic formulas }%
\begin{equation}
\lambda_{n}(t)=(2\pi n+t)^{2}+O(\frac{\ln\left\vert n\right\vert }{n}),\text{
}\Psi_{n,t}(x)=e^{i(2\pi n+t)x}+O(\frac{1}{n}).
\end{equation}
\textit{These asymptotic formulas are uniform with respect to }$t$\textit{ in
}$[\rho,\pi-\rho],$ \textit{where }$\rho\in(0,\frac{\pi}{2})$\textit{ (see
Theorem 2 of [9]). Furthermore, the following formulas hold (see (22) and (28)
in [12]): }%
\begin{equation}
(\lambda_{n}(t)-(2\pi n+t)^{2})(\Psi_{n,t},e^{i(2\pi n+t)x})=A_{m}(\lambda
_{n}(t))(\Psi_{n,t},e^{i(2\pi n+t)x})+R_{m+1}(\lambda_{n}(t)),
\end{equation}
\textit{where }%
\begin{equation}
A_{m}(\lambda)=\sum_{k=1,2,...,m}a_{k}(\lambda),
\end{equation}%
\begin{equation}
a_{k}(\lambda)=\sum_{n_{1},n_{2},...,n_{k}}\frac{q_{n_{1}}q_{n_{2}}%
...q_{n_{k}}q_{-n(k)}}{%
{\textstyle\prod\limits_{s=1,2,...,k}}
[\lambda-(2\pi(n-n(s))+t)^{2}]},
\end{equation}%
\begin{equation}
R_{m}(\lambda)=\sum_{n_{1},n_{2},...,n_{m}}\frac{q_{n_{1}}q_{n_{2}}%
...q_{n_{m}}(q\Psi_{n,t},e^{i(2\pi(n-n(m))+t)x})}{%
{\textstyle\prod\limits_{s=1,2,...,m}}
[\lambda-(2\pi(n-n(s))+t)^{2}]}=O(\frac{\ln\left\vert n\right\vert }{n})^{m},
\end{equation}%
\begin{equation}
n(s)=:n_{1}+n_{2}+\cdots+n_{s}%
\end{equation}
\textit{and the summations in (13) and (14) are taken with the conditions
}$n(s)\neq0$\textit{ for }$s=1,2,...$\textit{.}

\section{ On the Bloch Eigenvalues and Bloch Functions}

Denote by $L_{1}^{+}[0,1]$ and $L_{1}^{-}[0,1]$ the set of all $q\in
L_{1}[0,1]$ satisfying (7) and $q_{n}=0$ \ for $n=0,1,2,...$respectively. The
formula (11) immediately give us the following

\begin{theorem}
If $q\in L_{1}^{+}[0,1]$ then the eigenvalues of $L_{t}(q)$ for $t\in
\mathbb{C}$ are $(2\pi n+t)^{2}$, where $n\in\mathbb{Z}$. These eigenvalues
for $t\neq\pi k,$ where $k\in\mathbb{Z}$, are simple. The eigenvalues $(2\pi
n)^{2}$ for $n\in\mathbb{Z}\backslash\{0\}$ and $(2\pi n+\pi)^{2}$ for
$n\in\mathbb{Z}$ are double eigenvalues of $L_{0}(q)$ and $L_{\pi}(q)$
respectively. The theorem continues to hold if $L_{1}^{+}[0,1]$ is replaced by
$L_{1}^{-}[0,1].$
\end{theorem}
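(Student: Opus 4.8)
The plan rests on a single observation: for $q\in L_1^+[0,1]$ every polynomial $A_m(\lambda)$ in (11) vanishes identically. Indeed, $q\in L_1^+[0,1]$ means $q_j=0$ for all $j\le 0$, so a nonzero term of the sum (13) forces $n_1,\dots ,n_k\ge 1$; then $n(k)=n_1+\cdots +n_k\ge k\ge 1$, so $q_{-n(k)}=0$, and hence $a_k(\lambda)\equiv 0$ for every $k$ and $A_m(\lambda)\equiv 0$ for every $m$ by (12). For $q\in L_1^-[0,1]$ one has $q_j=0$ for $j\ge 0$; a nonzero term then forces $n_1,\dots ,n_k\le -1$, so $n(k)\le -k\le -1$, $-n(k)\ge k\ge 1$, and again $q_{-n(k)}=0$; thus everything below applies verbatim to that case as well.

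With this in hand I would first read off the large eigenvalues. Setting $A_m\equiv 0$ in (11) gives, for all large $|n|$, $(\lambda_n(t)-(2\pi n+t)^2)(\Psi_{n,t},e^{i(2\pi n+t)x})=R_{m+1}(\lambda_n(t))$, and by (14) the right-hand side is $O((\ln|n|/n)^{m+1})$ with a constant not depending on $m$. By the asymptotics (10), $(\Psi_{n,t},e^{i(2\pi n+t)x})=1+O(1/n)$ is nonzero for large $|n|$; dividing by it and letting $m\to\infty$ (legitimate for each fixed $n$ with $\ln|n|<|n|$, since then $(\ln|n|/n)^{m+1}\to 0$) forces $\lambda_n(t)=(2\pi n+t)^2$ exactly. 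Thus, uniformly for $t\in[\rho,\pi-\rho]$, all but finitely many of the numbers $(2\pi n+t)^2$ are eigenvalues of $L_t(q)$.

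To obtain \emph{all} eigenvalues and their multiplicities I would pass to the Hill discriminant $\Delta(\lambda)$, the trace of the monodromy matrix, which is entire of order $1/2$ and whose zeros, counted with algebraic multiplicity, are the eigenvalues of $L_t(q)$, i.e.\ the solutions of $\Delta(\lambda)-2\cos t=0$. The previous step gives $\Delta((2\pi n+t)^2)=2\cos t$ for all $|n|\ge N$ and all $t\in[\rho,\pi-\rho]$. Fixing $n=N$ and letting $t$ vary over $[\rho,\pi-\rho]$, the point $\lambda=(2\pi N+t)^2$ runs over a subset of $(0,\infty)$ with a limit point, on which $2\pi N+t=\sqrt{\lambda}$ (principal branch) and hence $\Delta(\lambda)=2\cos(\sqrt{\lambda}-2\pi N)=2\cos\sqrt{\lambda}$; since $\Delta(\lambda)$ and the entire function $2\cos\sqrt{\lambda}$ agree on a set with an accumulation point, $\Delta(\lambda)\equiv 2\cos\sqrt{\lambda}$. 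Consequently, for every $t\in\mathbb{C}$ the eigenvalues of $L_t(q)$ are the zeros of $2\cos\sqrt{\lambda}-2\cos t$, which are exactly $\{(2\pi n+t)^2:n\in\mathbb{Z}\}$ (using $(2\pi k-t)^2=(2\pi(-k)+t)^2$). The zero at $\lambda_0=(2\pi n+t)^2\neq 0$ is governed by $\frac{d}{d\lambda}(2\cos\sqrt{\lambda})=-\sin\sqrt{\lambda}/\sqrt{\lambda}=-\sin t/(2\pi n+t)$: if $t\notin\pi\mathbb{Z}$ this is nonzero and the $(2\pi n+t)^2$ are pairwise distinct, so all eigenvalues are simple; a Taylor expansion at $t=0$ gives a simple zero at $\lambda=0$ and double zeros at $(2\pi n)^2$ for $n\neq 0$, and at $t=\pi$ double zeros at $(2\pi n+\pi)^2$ for $n\in\mathbb{Z}$, which is precisely the assertion.

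The genuinely new input is the vanishing $A_m\equiv 0$ of the first paragraph, which is immediate. The part that needs care is that (11) by itself controls only the large eigenvalues, so the finitely many small ones — and the multiplicity bookkeeping at $t=0,\pi$ — have to be extracted afterward via the identity $\Delta(\lambda)\equiv 2\cos\sqrt{\lambda}$; I expect this discriminant step, together with checking that the implied constant in the bound on $R_{m+1}$ stays bounded as $m\to\infty$ (it does, since each extra factor contributes only an $O(\ln|n|/n)$ multiple, so the limit in the second paragraph is valid), to be the only points requiring real attention.
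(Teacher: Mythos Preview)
Your proposal is correct and follows essentially the same route as the paper: the vanishing $A_m\equiv 0$ from the sign structure of the Fourier coefficients, the exact identification $\lambda_n(t)=(2\pi n+t)^2$ for large $|n|$ via (11), (14), (10), and then an identity-theorem argument on the Hill discriminant to capture all eigenvalues for all $t\in\mathbb{C}$. The paper carries out the last step in the $\mu$-variable (showing $F(\mu)\equiv 2\cos\mu$ rather than $\Delta(\lambda)\equiv 2\cos\sqrt{\lambda}$), and leaves the multiplicity count implicit, but these are cosmetic differences.
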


\begin{proof}
Since at least one of the indices $n_{1},n_{2},...,n_{k},$ $-n(k)$ (see (15))
is not positive number, by (7), (13) and (12) $a_{k}(\lambda_{n}(t))=0,$
$A_{m}(\lambda_{n}(t))=0$ $\ $for all $k,m.$Therefore in (11) letting $m$ tend
to infinity and then using (14) and (10) we obtain $\lambda_{n}(t)=(2\pi
n+t)^{2}$ for $t\in\lbrack\rho,\pi-\rho]$ and $n>N(\rho)\gg1.$ \ On the other
hand $\lambda_{n}(t)$ are the squares of the roots of
\[
F(\mu)=2\cos t,\text{ }\forall t\in\lbrack\rho,\pi-\rho],
\]
where $F(\mu)=\varphi^{^{\prime}}(1,\mu)+\theta(1,\mu)$ and $\varphi(x,\mu)$,
$\theta(x,\mu)$ are the solutions of the equation (4) satisfying the initial
conditions $\theta(0,\mu)=\varphi^{^{\prime}}(0,\mu)=1,\quad\theta^{^{\prime}%
}(0,\mu)=\varphi(0,\mu)=0$ (see [3]). Thus the entire functions $F(\mu)$ and
$2\cos\mu$ coincide on $\{(2\pi n+t):t\in\lbrack\rho,\pi-\rho]\}$. Therefore
these functions are identically equal in the complex plane and hence the
eigenvalues of $L_{t}(q)$ are the squares of the roots of the equation
$\cos\mu=\cos t$ for all $t\in\mathbb{C}.$ That is, in the case $q\in
L_{1}^{+}[0,1]$ the theorem is proved. The case $q\in L_{1}^{-}[0,1]$ can be
proved in the same way
\end{proof}

\begin{corollary}
Suppose $q\in L_{1}^{+}[0,1]\cup L_{1}^{-}[0,1]$. Let $E_{2n}$ and $E_{2n+1}$
be the subspaces corresponding to the eigenvalues $(2\pi n)^{2}$ and $(2\pi
n+\pi)^{2}$ of the operators $L_{0}(q)$ and $L_{\pi}(q)$ respectively. Then
any sequence obtained as the union of orthonolmal bases of all the subspaces
$E_{2n}$ ($E_{2n+1}$) for $n=0,1,...$ is a Reisz basis of $L_{2}[0,1].$
\end{corollary}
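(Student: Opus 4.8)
The plan is to identify $E_{2n}$ (resp.\ $E_{2n+1}$) with the range of the Riesz spectral projection of $L_{0}(q)$ (resp.\ $L_{\pi}(q)$) for the eigenvalue $(2\pi n)^{2}$ (resp.\ $(2\pi n+\pi)^{2}$), to show that this projection is quadratically close to a natural orthogonal projection of $L_{2}[0,1]$, and then to invoke the Bari--Markus theorem on Riesz bases of subspaces. I describe the periodic case for $q\in L_{1}^{+}[0,1]$; the antiperiodic case and the case $q\in L_{1}^{-}[0,1]$ need only routine modifications. Let $H_{n}=\operatorname{span}\{e^{i2\pi nx},e^{-i2\pi nx}\}$ for $n\geq1$, $H_{0}=\operatorname{span}\{1\}$, and let $Q_{n}$ be the orthogonal projection of $L_{2}[0,1]$ onto $H_{n}$; the $Q_{n}$ are mutually orthogonal with $\sum_{n\geq0}Q_{n}=I$. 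By Theorem~1 the spectrum of $L_{0}(q)$ is $\{(2\pi m)^{2}:m\geq0\}$, the point $(2\pi m)^{2}$ having algebraic multiplicity $2$ for $m\geq1$ and $1$ for $m=0$; hence for each $n$ one can pick a circle $\Gamma_{n}$ centred at $(2\pi n)^{2}$ of radius $r_{n}\asymp n$ (say $r_{n}=2\pi^{2}(2n-1)$ for $n\geq1$) that encloses $(2\pi n)^{2}$ and no other point of the spectrum of $L_{0}(q)$ or of $L_{0}(0)=-d^{2}/dx^{2}$. Put $P_{n}=\frac{1}{2\pi i}\oint_{\Gamma_{n}}(zI-L_{0}(q))^{-1}\,dz$; since $L_{0}(q)$ has compact resolvent, $\operatorname{Ran}P_{n}=E_{2n}$, $P_{n}P_{m}=0$ for $n\neq m$, and $\frac{1}{2\pi i}\oint_{\Gamma_{n}}(zI-L_{0}(0))^{-1}\,dz=Q_{n}$.

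The heart of the matter is to prove $\sum_{n}\|P_{n}-Q_{n}\|^{2}<\infty$. Since the two Riesz projections are taken over the same contour, $P_{n}-Q_{n}=\frac{1}{2\pi i}\oint_{\Gamma_{n}}\bigl[(zI-L_{0}(q))^{-1}-(zI-L_{0}(0))^{-1}\bigr]\,dz$, and I would estimate the integrand using the Neumann series $(zI-L_{0}(q))^{-1}-R_{0}(z)=\sum_{k\geq1}(R_{0}(z)q)^{k}R_{0}(z)$, $R_{0}(z)=(zI-L_{0}(0))^{-1}$, each term being controlled by the Schur test in the Fourier basis $\{e^{i2\pi kx}\}$. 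On $\Gamma_{n}$ one has $\operatorname{dist}(z,\sigma(L_{0}(0)))\asymp n$ and, more precisely, $\sum_{m}|(2\pi m)^{2}-z|^{-1}=O(n^{-1}\ln n)$; together with $|q_{m}|\leq\|q\|_{L_{1}}$ this gives $\|(R_{0}(z)q)^{k}R_{0}(z)\|=O\!\bigl(n^{-1}(\|q\|_{L_{1}}n^{-1}\ln n)^{k}\bigr)$ uniformly on $\Gamma_{n}$ (note that each factor $q$ sits between two resolvents, so these operators are bounded even though $q$ itself need not be a bounded multiplier on $L_{2}$). Summing over $k$ for $n$ large yields $\|(zI-L_{0}(q))^{-1}-R_{0}(z)\|=O(n^{-2}\ln n)$ on $\Gamma_{n}$, and integrating over $\Gamma_{n}$, whose length is $O(n)$, gives $\|P_{n}-Q_{n}\|=O(n^{-1}\ln n)$, which is square-summable.

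To conclude, one also needs that the root functions of $L_{0}(q)$ are complete in $L_{2}[0,1]$; this is classical for the periodic (antiperiodic) boundary value problem. Thus $\{Q_{n}\}$ is a complete family of mutually orthogonal projections, $\{P_{n}\}$ a family of mutually disjoint projections with $\sum_{n}\|P_{n}-Q_{n}\|^{2}<\infty$, and $\{\operatorname{Ran}P_{n}\}$ is complete, so by the Bari--Markus theorem there is a bounded invertible operator $T$ on $L_{2}[0,1]$ with $T(E_{2n})=H_{n}$ for every $n$; that is, $\{E_{2n}\}_{n\geq0}$ is a Riesz basis of subspaces. Finally, because $\dim E_{2n}\leq2$, any orthonormal basis of $E_{2n}$ is mapped by $T$ to a basis of $H_{n}$ whose Gram matrix has norm at most $\|T\|^{2}$ and inverse norm at most $\|T^{-1}\|^{2}$, uniformly in $n$; as the $H_{n}$ are mutually orthogonal, the union of these bases is a Riesz basis of $L_{2}[0,1]$, and applying $T^{-1}$ shows that any union of orthonormal bases of the $E_{2n}$ is a Riesz basis of $L_{2}[0,1]$.

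The main obstacle is the quadratic-closeness estimate $\sum_{n}\|P_{n}-Q_{n}\|^{2}<\infty$. There are two delicate points: $q$ is only assumed to be in $L_{1}[0,1]$, so the resolvent expansion must be read with some care; and, more importantly, $t=0$ and $t=\pi$ are exactly the parameters at which $(2\pi n)^{2}$ (resp.\ $(2\pi n+\pi)^{2}$) is a double eigenvalue that may carry a Jordan chain. Here one cannot argue through individual eigenfunctions (which may become nearly parallel, with biorthogonal functions blowing up as $t\to0$), which is why I would work throughout with the Riesz projections $P_{n}$ over the fixed contours $\Gamma_{n}$. The role of the triangular hypothesis $q_{n}=0$ $(n\le0)$ is, via Theorem~1, to force the two eigenvalues of each ``bunch'' to coincide, so that the resulting Riesz basis with brackets is in fact a basis of the single subspaces $E_{2n}$.
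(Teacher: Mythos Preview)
Your argument is correct, but it takes a substantially different and more laborious route than the paper's. The paper's proof is two sentences: it notes that the periodic and antiperiodic boundary conditions are regular and cites Shkalikov [11] to conclude that the sequence of root subspaces $\{E_{2n}\}$ (resp.\ $\{E_{2n+1}\}$) is automatically a Riesz basis of subspaces in $L_{2}[0,1]$; then it invokes the standard fact (Gohberg--Krein [5], p.~344) that the union of orthonormal bases of a Riesz basis of finite-dimensional subspaces is itself a Riesz basis of vectors. In particular, the paper's Riesz-basis step does not use the hypothesis $q\in L_{1}^{\pm}[0,1]$ at all---that hypothesis enters only through Theorem~1, which pins down the eigenvalues and hence allows the root subspaces to be labelled as $E_{2n}$, $E_{2n+1}$. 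Your direct Bari--Markus computation buys self-containment and an explicit rate $\|P_{n}-Q_{n}\|=O(n^{-1}\ln n)$, whereas the paper simply outsources the analysis to an off-the-shelf theorem valid for all regular boundary value problems; both approaches are valid, and your Schur-test estimate does go through for $q\in L_{1}$ because every factor $q$ in the Neumann series is flanked by resolvents, so that only $\sup_{j}|q_{j}|\le\|q\|_{L_{1}}$ is needed rather than summability of the Fourier coefficients.
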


\begin{proof}
Since the periodic (antiperiodic) boundary conditions are regular by [11] the
sequence of subspaces $\left\{  E_{2n}\right\}  _{0}^{\infty}$ ($\left\{
E_{2n+1}\right\}  _{0}^{\infty}$) is a Riesz basis of the space $L_{2}[0,1]$.
Therefore the proof follows from the following well-known statement: \ If the
sequence of subspaces $\left\{  H_{n}\right\}  _{0}^{\infty}$ is a Riesz basis
of the Hilbert spaces $H,$ $\ $then any sequence obtained as the union of
orthonolmal bases of all the subspaces $H_{n}$ for $n=0,1,...$ form a Reisz
basis in $H$ ( see [5] p. 344).
\end{proof}

Note that the operators $L_{0}(q)$ and $L_{\pi}(q)$ in the case $q(x)=Ae^{2\pi
irx},$ where $A\in\mathbb{C}$ and $r\in\mathbb{Z}$, was investigated in detail
by N. B. Kerimov [7]. He found a necessary and sufficient condition for a
system of root functions of these operators to be a basis in $L_{p}[0,1]$ for
arbitrary $p\in(1,\infty).$ Moreover he determined whether the eigenvalue
$(\pi n)^{2}$ corresponds to the 2 linearly independent eigenfunctions or
eigenfunction and associated function and wrote explicit formulas for all root functions.

Now to consider the Bloch functions $\Psi_{n,t}(x)$ corresponding to the
eigenvalue $(2\pi n+t)^{2}$ we use the equality
\begin{equation}
((2\pi n+t)^{2}-(2\pi(n+p)+t)^{2})(\Psi_{n,t},e^{i(2\pi(n+p)+\overline{t}%
)x})=(q\Psi_{n,t},e^{i(2\pi(n+p)+\overline{t})x})
\end{equation}
obtained from $-\Psi_{n,t}^{^{\prime\prime}}+q\Psi_{n,t}=(2\pi n+t)^{2}%
\Psi_{n,t}$ by multiplying $e^{i(2\pi(n+p)+\overline{t})x}$.

\begin{theorem}
Suppose $q\in L_{1}^{+}[0,1]$. Let $\Psi_{n,t}(x)$ be the eigenfunction of the
operator $L_{t}(q)$ corresponding to the eigenvalue $(2\pi n+t)^{2}$ and
normalized as
\begin{equation}
(\Psi_{n,t},e^{i(2\pi n+\overline{t})x})=1,
\end{equation}
where $t\neq\pi k$ \ for $k\in\mathbb{Z}$. Then\textit{ }%
\begin{equation}
\text{ }\Psi_{n,t}(x)=e^{i(2\pi n+t)x}+\sum_{p\in\mathbb{N}}c_{p,n}%
(t)e^{i(2\pi(n+p)+t)x},
\end{equation}
where
\begin{equation}
c_{p,n}(t)=d_{p,n}(t)(q_{p}+\sum_{k=1}^{p-1}\sum_{n_{1},n_{2},...,n_{k}}%
\frac{q_{n_{1}}q_{n_{2}}...q_{n_{k}}q_{p-n(k)}}{%
{\textstyle\prod\limits_{s=1}^{k}}
(2\pi(2n+p-n(s))+2t)2\pi(n(s)-p)}),
\end{equation}
$d_{p,n}(t)=-(2\pi p(2\pi(2n+p)+2t))^{-1}$ for $p=1,2,...$ and
\begin{equation}
\{n_{1},n_{2},...,n_{s},p-n_{1}-n_{2}-...-n_{s}\}\subset\mathbb{N}%
=:\{1,2,...,\}
\end{equation}
for $s=1,2,...,p-1.$ The theorem continues to hold if $L_{1}^{+}[0,1]$ is
replaced by $L_{1}^{-}[0,1]$ and $\mathbb{N}$ in (18) and (20) is replaced by
$-\mathbb{N}$\textit{.}
\end{theorem}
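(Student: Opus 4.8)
The plan is to work directly with the coefficient recursion (17). First I would fix $t \neq \pi k$ and $n \in \mathbb Z$, and observe that by Theorem~1 the eigenvalue $(2\pi n+t)^2$ is simple, so up to scalar there is a unique eigenfunction; normalizing by (18) pins it down. Writing $\Psi_{n,t} = \sum_{m\in\mathbb Z} \beta_m e^{i(2\pi m + t)x}$ with $\beta_n = 1$, equation (17) with $p$ replaced so that the test exponent is $2\pi m + t$ reads $((2\pi n+t)^2 - (2\pi m+t)^2)\beta_m = \sum_j q_{m-j}\,\beta_j$, where by $q\in L_1^+[0,1]$ the sum runs only over $j \le m$ with $m-j \ge 1$. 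The key structural point, which I would extract first: since $q_r = 0$ for $r \le 0$, the right-hand side for index $m$ only involves $\beta_j$ with $j < m$; and for $m \le n$ the left-hand coefficient $(2\pi n+t)^2-(2\pi m+t)^2$ is nonzero unless $m=n$ (using $t\neq \pi k$, so $2\pi m+t \neq \pm(2\pi n+t)$ for $m\neq n$). Running the recursion downward from $m=n$ forces $\beta_m = 0$ for all $m < n$; hence $\Psi_{n,t}$ is supported on $m \ge n$, which is exactly the shape (19) after relabeling $m = n+p$, $\beta_{n+p} =: c_{p,n}(t)$, with $c_{0,n}=1$.

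Next I would solve the recursion upward for $p \ge 1$. The coefficient on the left is $(2\pi n+t)^2 - (2\pi(n+p)+t)^2 = -2\pi p\,(2\pi(2n+p)+2t)$, which is nonzero for $p\ge 1$ (again using $t\neq\pi k$), and its reciprocal is precisely $d_{p,n}(t)$. So
\[
c_{p,n}(t) = d_{p,n}(t)\Big(q_p + \sum_{j=1}^{p-1} q_{p-j}\, c_{j,n}(t)\Big),
\]
since the source term is $\sum_{j} q_{(n+p)-j}\,\beta_j$ restricted to $n < j < n+p$, i.e.\ $j = n+r$ with $1\le r\le p-1$, giving $q_{p-r} c_{r,n}$, plus the boundary term $q_p c_{0,n} = q_p$. (The missing $j = n+p$ term is absorbed on the left; the $j=n$ term contributes $q_p$.) Then I would prove (20) by induction on $p$: substitute the inductive formula for each $c_{j,n}$, $1\le j\le p-1$, into the displayed recursion and check that the resulting nested sums reorganize into the stated product over $s=1,\dots,k$ with denominators $(2\pi(2n+p-n(s))+2t)\cdot 2\pi(n(s)-p)$ and support condition (21). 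The bookkeeping is: a term of $c_{j,n}$ with multi-index $(n_1,\dots,n_{k-1})$ gets multiplied by $q_{p-j}$ and by $d_{p,n}$; setting $n_k := j - n(k-1)$ (so that $p - n(k) = p-j$ and $n_k \ge 1$ because it indexes a nonzero Fourier coefficient of $q$ inside $c_{j,n}$) converts the length-$(k-1)$ term into a length-$k$ term, and the new factor $d_{p,n}$ together with the old $d_{j,n}$ factors telescopes into $\prod_{s=1}^k (2\pi(2n+p-n(s))+2t)^{-1}(2\pi(n(s)-p))^{-1}$ after one checks $j = n(k)$ so $d_{j,n} = -(2\pi n(k)(2\pi(2n+n(k))+2t))^{-1}$ matches the $s=k$ factor.

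Convergence of the series (19) is not an obstacle here because we already know from (10) that the eigenfunction $\Psi_{n,t}$ exists in $L_2[0,1]$ for large $n$ (and the symmetric argument in Theorem~1 plus simplicity of eigenvalues extends existence to all $n$ with $t\neq\pi k$), so its Fourier coefficients are automatically square-summable; the recursion then merely identifies them. The main obstacle I anticipate is the combinatorial identity in the induction step — making the index substitution $n_k := j - n(k)$ rigorous, verifying that the support condition (21) for the length-$k$ term follows from (21) for the length-$(k-1)$ term together with $n_k\ge 1$ and $p-n(k)\ge 1$, and confirming that no denominator vanishes (i.e.\ $n(s)\neq p$ for $s<k$ and $2\pi(2n+p-n(s))+2t\neq 0$, the latter from $t\neq\pi k$). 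Finally, the $L_1^-[0,1]$ case is handled by the reflection $n_j \mapsto -n_j$, $p\mapsto -p$, which turns (7) into $q_n=0$ for $n\ge 0$ and replaces $\mathbb N$ by $-\mathbb N$ throughout, reusing the identical argument.
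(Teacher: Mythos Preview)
Your overall strategy---expand $\Psi_{n,t}$ in the basis $\{e^{i(2\pi m+t)x}\}$, observe the one-sided (upper-triangular) coupling coming from $q\in L_1^+$, and then solve the resulting finite recursion for $c_{p,n}$ by induction on $p$---is a reasonable route and is somewhat cleaner than the paper's, which instead iterates the relation $(\Psi_{n,t},e^{i(2\pi(n+p)+\bar t)x})=d_{p,n}(t)\sum_{n_1}q_{n_1}(\Psi_{n,t},e^{i(2\pi(n+p-n_1)+\bar t)x})$ for every $p$ (positive and negative) and controls a remainder $r_m\to 0$ as $m\to\infty$. However, your argument has a genuine gap and a bookkeeping error.

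\textbf{The gap.} The sentence ``running the recursion downward from $m=n$ forces $\beta_m=0$ for all $m<n$'' is not justified. For $m<n$ the relation reads $C_m\beta_m=\sum_{r\ge 1}q_r\beta_{m-r}$ with $C_m\neq 0$, so $\beta_m$ is expressed through $\beta_j$ with $j<m$; there is no base case below, and an infinite lower-triangular homogeneous system of this type can perfectly well have nonzero $\ell^2$ solutions. Your later claim that ``the recursion then merely identifies'' the Fourier coefficients is circular, since your recursion for $c_{p,n}$ with $p\ge 1$ already presupposes $\beta_j=0$ for $j<n$ (otherwise the source term $\sum_j q_{(n+p)-j}\beta_j$ contains additional contributions from $j<n$). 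The paper closes this gap analytically: iterating the relation $m$ times produces a remainder $r_m$ of the form (25), and the estimates (26)--(28) force $r_m\to 0$; for $p<0$ the isolated terms never appear (the index constraint (20) is empty), so the coefficient vanishes. An alternative fix in your framework would be to \emph{construct} $\widetilde\Psi:=\sum_{p\ge 0}c_{p,n}(t)e^{i(2\pi(n+p)+t)x}$ with $c_{p,n}$ given by your recursion, prove directly that $\sum_p|c_{p,n}|^2<\infty$ (a short inductive bound using $|d_{p,n}|\sim (4\pi^2 p^2)^{-1}$ suffices), check that $\widetilde\Psi$ satisfies the eigenvalue equation, and then invoke simplicity of the eigenvalue to conclude $\Psi_{n,t}=\widetilde\Psi$. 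Either way, some analytic input beyond ``triangular structure'' is required.

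\textbf{The bookkeeping error.} In the inductive step your substitution ``$n_k:=j-n(k-1)$'' does not reproduce the denominators of (20). If you expand a length-$(k-1)$ term of $c_{j,n}$ with multi-index $(n_1,\dots,n_{k-1})$, its $d$-factors are $d_{j,n}\prod_{s=1}^{k-1}d_{j-n(s),n}$, whereas the target length-$k$ term of $c_{p,n}$ carries $\prod_{s=1}^{k}d_{p-n'(s),n}$. The correspondence that makes these match is to \emph{prepend} the new index: set $n_1':=p-j$ and $n_{s+1}':=n_s$ for $s=1,\dots,k-1$. Then $p-n'(1)=j$ and $p-n'(s+1)=j-n(s)$, so the $d$-factors line up, and $q_{p-n'(k)}=q_{j-n(k-1)}$, $q_{n_1'}=q_{p-j}$ as needed. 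With this corrected substitution the induction goes through; your stated one does not.
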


\begin{proof}
Let $\Psi_{n,t}(x)$ be the normalized eigenfunction of the operator $L_{t}(q)$
corresponding to the eigenvalue $(2\pi n+t)^{2}$ and $t\neq\pi k$ \ for
$k\in\mathbb{Z}.$ (In the end we prove that there exists an eigenfunction of
the operator $L_{t}(q)$ satisfying (17). For simplicity of notation we denote
it also by $\Psi_{n,t}$). Since the systems $\{e^{i(2\pi n+t)x}:n\in
\mathbb{Z}\}$ and $\{e^{i(2\pi n+\overline{t})x}:n\in\mathbb{Z}\}$ are
biorthogonal in $L_{2}[0,1]$ we have
\begin{equation}
\Psi_{n,t}(x)-(\Psi_{n,t},e^{i(2\pi n+\overline{t})x})e^{i(2\pi n+t)x}%
=\sum_{p\in\mathbb{Z}\backslash\{0\}}(\Psi_{n,t},e^{i(2\pi(n+p)+\overline
{t})x})e^{i(2\pi(n+p)+t)x}.
\end{equation}
To find $(\Psi_{n,t},e^{i(2\pi(n+p)+\overline{t})x})$ we iterate (16) by
using
\begin{equation}
(q\Psi_{n,t},e^{i(2\pi(n+p)+\overline{t})x})=\sum_{n_{1}}q_{n_{1}}(\Psi
_{n,t}(x),e^{i(2\pi(n+p-n_{1})+\overline{t})x})
\end{equation}
(see (14) of [12]). Namely, using (22) and (16) we get
\begin{equation}
(\Psi_{n,t},e^{i(2\pi(n+p)+\overline{t})x})=d_{p,n}(t)\sum_{n_{1}}q_{n_{1}%
}(\Psi_{n,t},e^{i(2\pi(n+p-n_{1})+\overline{t})x}).
\end{equation}
Now isolate the terms in the right-hand side of (23) containing the multiplicand

$(\Psi_{n,t},e^{i(2\pi n+\overline{t})x})$ which occurs in the case $n_{1}=p$
and use (23) for the other terms to get
\[
(\Psi_{n,t},e^{i(2\pi(n+p)+\overline{t})x})=d_{p,n}(t)(q_{p}(\Psi
_{n,t},e^{i(2\pi n+\overline{t})x})+\sum_{n_{1},n_{2}}\frac{q_{n_{1}}q_{n_{2}%
}(\Psi_{n,t},e^{i(2\pi(n+p-n_{1}-n_{2})+\overline{t})x})}{(2\pi(2n+p-n_{1}%
)+2t)2\pi(n_{1}-p)}).
\]
Repeating this process $m$ times, that is, isolating again the terms
containing the multiplicand $(\Psi_{n,t},e^{i(2\pi n+\overline{t})x})$ which
occurs in the case $n_{1}+n_{2}=p$ and using again (23) for the other terms
and doing this iteration $m$ times, we obtain%
\begin{equation}
(\Psi_{n,t},e^{i(2\pi(n+p)+\overline{t})x})=c_{p,n}(t)(\Psi_{n,t},e^{i(2\pi
n+\overline{t})x})+r_{m},
\end{equation}%
\begin{equation}
r_{m}=d_{p,n}(t)\sum_{n_{1},n_{2},...,n_{m}}\left(  \frac{q_{n_{1}}q_{n_{2}%
}...q_{n_{m}}(q\Psi_{n,t},e^{i(2\pi(n+p-n(m))+\overline{t})x})}{%
{\textstyle\prod\limits_{s=1}^{m}}
[(2\pi(2n+p-n(s))+2t)2\pi(n(s)-p)]}\right)  ,
\end{equation}
where $m>p,$ $p-n(s)\neq0$ for $s=1,2,...,m.$

Now we prove that $r_{m}\rightarrow0$ as $m\rightarrow\infty.$ By (20),
$n_{k}\geq1$ for $k=1,2,...,m$ and hence $n(s)\geq s.$ Using this and taking
into account that \ $(q\Psi_{n,t},e^{i(2\pi(n+p-n(m))+\overline{t}%
)x})\rightarrow0$ as $m\rightarrow\infty$ from (25) we obtain
\begin{equation}
\mid r_{m}\mid\leq\mid d_{p,n}(t)\mid%
{\textstyle\prod\limits_{s=1,2,...,m}}
\left(  \sum_{j\geq s,\text{ }j\neq p}\frac{M}{\mid(2\pi(2n+p-j)+2t)2\pi
(j-p)\mid}\right)
\end{equation}
for $m\gg1$, where $M=\sup_{n}\mid q_{n}\mid.$ Clearly, there exists $K(t)$
such that
\begin{equation}
\sum_{j\geq s,\text{ }j\neq p}\mid\frac{M}{(2\pi(2n+p-j)+2t)2\pi(j-p)}\mid\leq
K(t).
\end{equation}
for $s=1,2,...,m.$ Moreover, if $s>4(\mid n\mid+\mid p\mid)$ then%
\begin{equation}
\sum_{j\geq s}\mid\frac{M}{(2\pi(2n+p-j)+2t)2\pi(j-p)}\mid<\sum_{j\geq s}%
\frac{M}{j^{2}}<\frac{M}{s-1}.
\end{equation}
Now using (26)-(28) we obtain
\[
\mid r_{m}\mid\leq\frac{\mid d_{p,n}(t)\mid M^{m-4(\mid n\mid+\mid p\mid
)}(K(t))^{4(\mid n\mid+\mid p\mid)}}{4(\mid n\mid+\mid p\mid)(4(\mid
n\mid+\mid p\mid)+1)...(m-1)}%
\]
which implies that $r_{m}\rightarrow0$ as $m\rightarrow\infty.$ Therefore in
(24) letting $m$ tend to infinity we get
\begin{equation}
(\Psi_{n,t},e^{i(2\pi(n+p)+\overline{t})x})=c_{p,n}(t)(\Psi_{n,t},e^{i(2\pi
n+\overline{t})x}).
\end{equation}
This with (21) shows that $(\Psi_{n,t},e^{i(2\pi n+\overline{t})x})\neq0.$
Therefore, there exists eigenfunction, denoted again by $\Psi_{n,t},$
satisfying (17) and for this eigenfunction, by (29), we have
\begin{equation}
(\Psi_{n,t},e^{i(2\pi(n+p)+\overline{t})x})=c_{p,n}(t).
\end{equation}
The indices $n_{1},n_{2},...,n_{k},p-n(k)$ taking part in the expression of
$c_{p,n}(t)$ (see (19)) satisfy (20). Therefore if $p<0,$ then the set of
these indices is empty, that is, the first term on the right-hand side of (24)
does not appear at all. Hence, from (24) using the relation $r_{m}%
\rightarrow0$ as $m\rightarrow\infty$ we obtain
\begin{equation}
(\Psi_{n,t},e^{i(2\pi(n+p)+\overline{t})x})=0,\text{ }\forall p<0.
\end{equation}
Thus (18) for $q\in L_{1}^{+}[0,1]$ follows from (21), (17), (30) and (31).
The case $q\in L_{1}^{-}[0,1]$ can be considered in the same way.
\end{proof}

\section{On the Inverse Problem}

First consider the Floquet solutions of the equation (4) defined by%

\begin{equation}
\Psi(x,\mu)=\Psi_{n,t}(x)
\end{equation}
for $\mu=(2\pi n+t),$ where $n\in\mathbb{Z}$, $\operatorname{Re}t\in(-\pi
,\pi]$ and $\Psi_{n,t}(x)$ is studied in Theorem 2. Since $\Psi_{n,t}(x)$
satisfies (9), we have
\[
\Psi_{n,t}(x+m)=e^{itm}\Psi_{n,t}(x)
\]
for all $x\in(-\infty,\infty)$ and $m\in\mathbb{Z}$. This with the equality
$\operatorname{Im}\mu=\operatorname{Im}t$ implies that

$\Psi(x,\mu)\in L_{2}(a,\infty),$ $\Psi(x,-\mu)\in L_{2}(-\infty,a)$ for
$\operatorname{Im}\mu>0$ and $a\in(-\infty,\infty)$. Therefore repeating the
arguments of [4] one can obtain the spectral expansion. Note that we construct
the Floquet solution for more general case and by the other method (see Result
2 in introduction).

Now we consider the inverse problem as follows. We write the Fourier
decomposition of $\Psi_{n,t}(x)$ and $\Psi_{-n,-t}(x)$ in the form \ \textit{
}%
\begin{equation}
\text{ }\Psi_{n,t}(x)=\sum_{p\in\mathbb{Z}}c_{p,n}(t)e^{i(2\pi(n+p)+t)x}%
,\text{ }\Psi_{-n,-t}(x)=\sum_{p\in\mathbb{Z}}c_{p,-n}(-t)e^{i(2\pi
(-n+p)-t)x},
\end{equation}
\ \textit{ }where, by Theorem 2, $c_{p,n}(t)$ for $p>0$ is defined by (19)
and
\begin{equation}
c_{0,n}(t)=1,\text{ }c_{p,n}(t)=0,\text{ }\forall p<0.
\end{equation}
First we show that
\begin{equation}
\lim_{t\rightarrow0}8nt\pi c_{2n+p,-n}(-t)=c_{p,n}(0)s_{2n},\text{ }\forall
p\geq-2n,
\end{equation}%
\begin{equation}
\text{ }\lim_{t\rightarrow\pi}4(2n+1)(t-\pi)\pi c_{2n+p+1,-n}(-t)=c_{p,n}%
(\pi)s_{2n+1},\text{ }\forall p\geq-2n-1,
\end{equation}
\textit{ }where
\begin{equation}
s_{n}=q_{n}+\sum_{k=1}^{n-1}S_{k}(n),\text{ }S_{k}(n)=\sum_{n_{1}%
,n_{2},...,n_{k}}\frac{q_{n_{1}}q_{n_{2}}...q_{n_{k}}q_{n-n(k)}}{%
{\textstyle\prod\limits_{s=1}^{k}}
(2\pi n(s))2\pi(n-n(s))},\text{ }n=1,2,...
\end{equation}
(see Lemma 1). Then using these equalities we prove that
\begin{equation}
\lim_{t\rightarrow0}8n\pi t\Psi_{-n,-t}(x)=s_{2n}\Psi_{n,0}(x),\forall n\geq1,
\end{equation}%
\begin{equation}
\lim_{t\rightarrow\pi}4(2n+1)(t-\pi)\pi\Psi_{-n,-t}(x)=s_{2n+1}\Psi_{n,\pi
}(x),\forall n\geq0
\end{equation}
(see Lemma 2 and Theorem 3). Due to (32) and (6) $\{s_{n}\}$ is the sequence
of the norming numbers. Finally, we investigate the property of the norming
numbers and consider the question when $\{s_{n}\}$ may be a sequence of the
norming number for the operator $L(q)$ with potential $q\in L_{1}^{+}[0,1]$
(see Theorem 4, Proposition 1 and Corollary 2).

\begin{lemma}
The equalities (35) and (36) hold for all $n\geq1$ and $n\geq0$ respectively.
\end{lemma}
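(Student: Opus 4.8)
The plan is to extract from (16) a recursion for the Fourier coefficients $c_{m,-n}(-t)$, to locate the (at most simple) pole of $c_{m,-n}(-t)$ at $t=0$ and at $t=\pi$, and then to identify the residues by comparing them, on the one hand, with the recursion obeyed by $c_{p,n}(0)$ (resp.\ $c_{p,n}(\pi)$), and on the other hand with the defining series (38) of $s_{N}$.

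Substituting $n\to -n$, $t\to -t$ into (16), (23)--(24) and (30)--(31) and using (34) gives, for $m\ge 1$ and $t\notin\pi\mathbb{Z}$,
\[
c_{m,-n}(-t)=d_{m,-n}(-t)\sum_{n_{1}\ge 1}q_{n_{1}}c_{m-n_{1},-n}(-t),\qquad d_{m,-n}(-t)=\frac{1}{2\pi m(2\pi(2n-m)+2t)},
\]
a \emph{finite} sum since $c_{j,-n}\equiv 0$ for $j<0$ and $c_{0,-n}\equiv 1$. Rewriting (19) with $n\to -n$, $p\to m$, $t\to -t$ and using $n(s)\ge 1$ (see (20)), one reads off that $c_{m,-n}(-t)$ is regular at $t=0$ for $1\le m\le 2n-1$, regular at $t=\pi$ for $1\le m\le 2n$, and has at most a simple pole at $t=0$ and at $t=\pi$ for every $m$. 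One also checks the identities $d_{2n,-n}(-t)=(8\pi nt)^{-1}$, $\;d_{2n+1,-n}(-t)=(4(2n+1)(t-\pi)\pi)^{-1}$, $\;d_{2n+p,-n}(0)=d_{p,n}(0)$ and $d_{2n+1+p,-n}(-\pi)=d_{p,n}(\pi)$.

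Now treat $t\to 0$ (the case $t\to\pi$ being word-for-word the same with $2n$ replaced by $2n+1$). Put $h_{p}:=\lim_{t\to 0}8\pi nt\,c_{2n+p,-n}(-t)$ for $p\ge 0$; the limit exists by the pole structure. At $m=2n$, where $d_{2n,-n}(-t)=(8\pi nt)^{-1}$, the recursion gives $h_{0}=\sum_{n_{1}}q_{n_{1}}c_{2n-n_{1},-n}(0)$, writing $c_{m,-n}(0):=\lim_{t\to 0}c_{m,-n}(-t)$ for $0\le m\le 2n-1$. For $p\ge 1$, multiplying the recursion at $m=2n+p$ by $8\pi nt$ and letting $t\to 0$ (the factor $8\pi nt$ turning the simple pole on the right into a finite limit): the terms with $n_{1}\le p$ contribute $d_{p,n}(0)q_{n_{1}}h_{p-n_{1}}$ and those with $n_{1}>p$ contribute $0$ (there $c_{2n+p-n_{1},-n}$ is regular at $0$), so $h_{p}=d_{p,n}(0)\sum_{n_{1}=1}^{p}q_{n_{1}}h_{p-n_{1}}$. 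Moreover $c_{0,-n}(0)=1$ and $c_{m,-n}(0)=(4\pi^{2}m(2n-m))^{-1}\sum_{n_{1}}q_{n_{1}}c_{m-n_{1},-n}(0)$ for $1\le m\le 2n-1$ (let $t\to 0$ in the recursion). Unfolding this last recursion until every index reaches $0$: a tuple $(n_{1},\dots,n_{j})$ contributes to $h_{0}$ exactly when $n(j)=2n$ (since each $n_{i}\ge 1$, the earlier partial sums are automatically $\le 2n-1$), with weight $q_{n_{1}}\cdots q_{n_{j}}\prod_{i=1}^{j-1}(4\pi^{2}n(i)(2n-n(i)))^{-1}$; writing $k=j-1$, $n_{j}=2n-n(k)$, this is precisely the generic term of $S_{k}(2n)$ in (38), so $h_{0}=q_{2n}+\sum_{k\ge 1}S_{k}(2n)=s_{2n}$. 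Finally, (23)--(24) and (30)--(31) give $c_{p,n}(0)=d_{p,n}(0)\sum_{n_{1}=1}^{p}q_{n_{1}}c_{p-n_{1},n}(0)$ with $c_{0,n}(0)=1$, so $(h_{p})$ and $(s_{2n}c_{p,n}(0))$ satisfy the same recursion with the same value at $p=0$; by induction on $p$, $h_{p}=s_{2n}c_{p,n}(0)$ for all $p\ge 0$, which is (35) for $p\ge 0$. For $-2n\le p<0$ the index $2n+p$ lies in $\{0,\dots,2n-1\}$, so the left side of (35) is $0$ and, by (34), so is $s_{2n}c_{p,n}(0)$. The identical computation with the normalizing factor $d_{2n+1,-n}(-t)^{-1}=4(2n+1)(t-\pi)\pi$ and $N=2n+1$ yields (36).

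I expect the only real obstacle to be the unfolding identity $\sum_{n_{1}}q_{n_{1}}c_{2n-n_{1},-n}(0)=s_{2n}$ (and its analogue at $t=\pi$): one has to verify, tracking exactly which denominator factors $2\pi n(s)$ and $2\pi(2n-n(s))$ are produced at each unfolding step, that the resulting multiple series matches $q_{2n}+\sum_{k}S_{k}(2n)$ term by term under the bijection $(n_{1},\dots,n_{j})\mapsto(n_{1},\dots,n_{j-1}\,;\,n_{j}=2n-n(j-1))$. Everything else — the pole-order bookkeeping, the passage to the limit, and the induction from $h_{0}$ to $h_{p}$ — is routine manipulation of finite sums of rational functions of $t$.
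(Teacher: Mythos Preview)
Your argument is correct and takes a genuinely different route from the paper's. The paper works directly with the explicit expansion (19): it rewrites $c_{2n+p,-n}(-t)$, $c_{p,n}(0)$ and $s_{2n}$ as sums $\sum_{k}D_{k}(-t)$, $\sum_{j}F_{j}$, $\sum_{i}E_{i}$, and then for $p>0$ splits each $D_{k}(-t)$ according to the first index $j+1$ at which the partial sum $n(j+1)$ hits $p$ (the only source of a pole at $t=0$). Taking the limit term by term produces the factorization $\lim 8n\pi t\,D_{k,j}(-t)=F_{j}E_{k-j-1}$, and summing over $k$ and $j$ yields $c_{p,n}(0)s_{2n}$.

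You instead exploit the one-step recursion $c_{m,-n}(-t)=d_{m,-n}(-t)\sum_{n_{1}}q_{n_{1}}c_{m-n_{1},-n}(-t)$, first isolating the pole structure (regular for $m<2n$, simple pole for $m\ge 2n$), then computing the single residue $h_{0}=s_{2n}$ by unfolding the regular part, and finally observing that the residues $h_{p}$ and the targets $s_{2n}c_{p,n}(0)$ satisfy the \emph{same} linear recursion in $p$ with the same initial value, so they agree by induction. This replaces the combinatorial decomposition $D_{k}=\sum_{j}D_{k,j}$ by a clean induction and is arguably more transparent; the paper's approach, on the other hand, makes the factorization $F_{j}E_{i}$ visible at the level of individual summands, which is closer to how (19) is actually built. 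The ``obstacle'' you flag --- matching $\sum_{n_{1}}q_{n_{1}}c_{2n-n_{1},-n}(0)$ with $q_{2n}+\sum_{k}S_{k}(2n)$ --- is exactly what the paper checks in one line from (19) and (37); your unfolding does the same computation, with the denominator at step $i$ being $4\pi^{2}(2n-n(i))n(i)=(2\pi n(i))(2\pi(2n-n(i)))$, so the bijection $(n_{1},\dots,n_{j})\leftrightarrow(n_{1},\dots,n_{j-1};\,n_{j}=2n-n(j-1))$ indeed matches terms. (Minor slip: the defining series of $s_{n}$ is (37), not (38).)
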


\begin{proof}
The proof of (35) for $p=-2n$ follows from (34). From the definition of
$d_{p,n}(t)$ (see Theorem 2) we see that
\begin{equation}
d_{2n+p,-n}(0)=d_{p,n}(0),\text{ }\lim_{t\rightarrow0}8n\pi td_{2n,-n}(-t)=1,
\end{equation}%
\begin{equation}
\lim_{t\rightarrow0}8n\pi td_{2n+p,-n}(-t)=0,\text{ }\forall p\neq0,-2n.
\end{equation}
Therefore by (19) and (37) we have
\[
\lim_{t\rightarrow0}8n\pi tc_{2n,-n}(-t)=s_{2n}.
\]
Thus the proof of (35) for $p=0$ also follows from (34).

To prove (35) in the more complicated cases $p\neq0,-2n$ we rewrite
$c_{2n+p,-n}(-t),$ $c_{p,n}(0)$ and $s_{2n}$ in the following form
\begin{equation}
c_{2n+p,-n}(-t)=\sum_{k=0}^{2n+p-1}D_{k}(-t),\text{ }D_{0}(-t)=d_{2n+p,-n}%
(-t)q_{2n+p},
\end{equation}%
\begin{equation}
D_{k}(-t)=\sum_{n_{1},n_{2},...,n_{k}}D_{k}(-t,n_{1},n_{2},...,n_{k}),\text{
}\forall k>0,
\end{equation}%
\begin{equation}
D_{k}(-t,n_{1},n_{2},...,n_{k})=\frac{d_{2n+p,-n}(-t)q_{n_{1}}q_{n_{2}%
}...q_{n_{k}}q_{2n+p-n(k)}}{%
{\textstyle\prod\limits_{s=1}^{k}}
(2\pi(p-n(s))-2t)2\pi(n(s)-2n-p)},
\end{equation}%
\begin{equation}
c_{p,n}(0)=\sum_{j=0}^{p-1}F_{j},\text{ }F_{0}=d_{p,n}(0)q_{p},
\end{equation}%
\begin{equation}
F_{j}=d_{p,n}(0)\sum_{n_{1},n_{2},...,n_{j}}\frac{q_{n_{1}}q_{n_{2}%
}...q_{n_{j}}q_{p-n(j)}}{%
{\textstyle\prod\limits_{s=1}^{j}}
(2\pi(p-n(s)))2\pi(n(s)-2n-p)},
\end{equation}%
\begin{equation}
s_{2n}=\sum_{i=0}^{2n-1}E_{i},\text{ }E_{0}=q_{2n},\text{ }E_{i}=\sum
_{m_{1},m_{2},...,m_{i}}\text{ }E(m_{1},m_{2},...,m_{i}),\text{\ }%
\end{equation}%
\begin{equation}
E(m_{1},m_{2},...,m_{i})=\frac{q_{m_{1}}q_{m_{2}}...q_{m_{i}}q_{2n-m(i)}}{%
{\textstyle\prod\limits_{s=1}^{i}}
(2\pi m(s))2\pi(2n-m(s))}.
\end{equation}
By (41)
\begin{equation}
\lim_{t\rightarrow0}8n\pi tD_{0}(-t)=0.
\end{equation}

Now let us investigate $D_{k}(-t)$ for $1\leq k\leq2n+p-1$ and $p\neq0,-2n.$
By (44), (7) and by (15) and (20) we have $2n+p-n(k)>0$ and $n(s)<n(k)$ for
$s<k$ . Therefore the multiplicand $n(s)-2n-p$ of the denominator of the
fraction in (44) is a negative integer:
\begin{equation}
\text{ }n(s)-2n-p<0
\end{equation}
for $s=1,2,...,k.$ To investigate the other multiplicand $p-n(s)$ consider the cases:

Case 1: $-2n<p<0.$ Since $n(s)>0,$ we have $p-n(s)\neq0.$ This with (50)
gives
\[
\lim_{t\rightarrow0}8n\pi tD_{k}(-t)=0.
\]
Therefore (35) follows from (34), (42) and (49).

Case 2: $p>0.$ \ One can readily see that $D_{k}(-t)$ can be written in the
form%
\begin{equation}
D_{k}(-t)=\sum_{j=-1}^{p-1}D_{k,j}(-t),
\end{equation}
where $D_{k,-1}(-t)$ and $D_{k,j}(-t)$ for $j\geq0$ are the right-hand side of
(43) when the summation is taken under conditions
\begin{equation}
p-n(s)\neq0,\text{ }\forall s=1,2,...k
\end{equation}
and
\begin{equation}
n_{1}+n_{2}+...+n_{j+1}=p.
\end{equation}
respectively. By (52), (50) and (44) we have
\begin{equation}
\lim_{t\rightarrow0}8n\pi tD_{k,-1}(-t)=0.
\end{equation}

Now consider $D_{k,j}(-t)$ for $j\geq0,$ i.e., assume that (53) holds. The
indices $n_{1},n_{2},...,n_{j+1}$ satisfying (53) take part in $D_{k,j}(-t)$
if and only if $j+1\leq k\leq2n+j,$ since $n_{s}>0$ for all $s=1,2,...$ and
$2n+p-n(k)>0$ (see (44)). Therefore
\begin{equation}
D_{k,j}(-t)=0
\end{equation}
for $k\leq j$ and for $k>2n+j.$ Thus it remains to consider $D_{k,j}(-t)$ for
$j\geq0$ and

$j+1\leq k\leq2n+j.$ If (53) holds then $n_{j+1}=p-n(j)$ and by (44) the
expression $D_{k}(-t,n_{1},n_{2},...,n_{k})$ can be written as product of
\[
\frac{d_{2n+p,-n}(-t)q_{n_{1}}q_{n_{2}}...q_{n_{j}}q_{p-n(j)}}{\left(
{\textstyle\prod\limits_{s=1}^{j}}
(2\pi(p-n(s))-2t)2\pi(n(s)-2n-p)\right)  8n\pi t}%
\]
and
\[
\frac{q_{n_{j+2}}q_{n_{j+3}}...q_{n_{k}}q_{2n-n_{j+2}-n_{j+3}-...-n_{k}}}{%
{\textstyle\prod\limits_{s=j+2}^{k}}
(2\pi(n_{k+2}+n_{k+3}+...+n_{s}))2\pi(2n-n_{k+2}-n_{k+3}-...-n_{s})}.
\]
The last expression is $E(m_{1},m_{2},...,m_{k-j-1})$ for $m_{1}=n_{j+2},$
$m_{2}=n_{j+3},...,m_{k-j-1}=n_{k}$ (see (48)). Using this and (40) and taking
into account that $p-n(s)\neq0$ for all $s\neq j+1$ (see (53) and use the
inequality $n_{s}>0$ for all $s=1,2,...$ ) we obtain
\[
\lim_{t\rightarrow0}8n\pi tD_{k,j}(-t)=F_{j}\sum_{m_{1},m_{2},...,m_{k-j-1}%
}E(m_{1},m_{2},...,m_{k-j-1}).
\]
This with (55), (47) and (45) implies that
\[
\text{ }\lim_{t\rightarrow0}8n\pi t\sum_{k=0}^{2n+p-1}D_{k,j}(-t)=F_{j}%
\sum_{k=j+1}^{2n+j}E_{k-j-1}=F_{j}s_{2n},
\]%
\[
\text{ }\lim_{t\rightarrow0}8n\pi t\sum_{j=0}^{p-1}\sum_{k=0}^{2n+p-1}%
D_{k,j}=s_{2n}\sum_{j=0}^{p-1}F_{j}=s_{2n}c_{p,n}(0).
\]
Thus (35) follows from (42), (51) and (54). In the same way one can prove (36)
\end{proof}

\begin{lemma}
For any $n\geq1$ and $n\geq0$ there exist constants $K$ and $L$ such that the
inequalities
\begin{equation}
8n\pi t\mid(q\Psi_{-n,-t},e^{i(2\pi m-t)x})\mid<K,\text{ }\forall
m\in\mathbb{Z}%
\end{equation}
and
\begin{equation}
4(2n+1)\pi(\pi-t)\mid(q\Psi_{-n,-t},e^{i(2\pi m-t)x})\mid<L,\text{ }\forall
m\in\mathbb{Z}%
\end{equation}
hold for $t\in(0,\frac{\pi}{2})$ \ and for $t\in\lbrack\frac{\pi}{2},\pi)$ respectively.
\end{lemma}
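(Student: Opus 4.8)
The plan is to turn the inner product into an explicit finite sum, note that the weight is tuned to cancel exactly one ``resonant'' denominator, and then estimate; I will carry out $t\in(0,\tfrac{\pi}{2})$, the interval $[\tfrac{\pi}{2},\pi)$ being identical with $2n$ replaced by $2n+1$ and $8n\pi t$ by $4(2n+1)\pi(\pi-t)$.

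First, apply the relation (16) with $n,t$ replaced by $-n,-t$ to the eigenfunction $\Psi_{-n,-t}$: using $(\Psi_{-n,-t},e^{i(2\pi m-t)x})=c_{m+n,-n}(-t)$ and $(2\pi n+t)^2-(2\pi m-t)^2=1/d_{m+n,-n}(-t)$, together with the formula $c_{m+n,-n}(-t)=d_{m+n,-n}(-t)\cdot(\text{bracket in (19)})$ from Theorem~2, the prefactor $d_{m+n,-n}(-t)$ cancels, so for $m+n\ge1$
\[
(q\Psi_{-n,-t},e^{i(2\pi m-t)x})=q_{m+n}+\sum_{k=1}^{m+n-1}\ \sum_{n_1,\dots,n_k}\ \frac{q_{n_1}\cdots q_{n_k}\,q_{m+n-n(k)}}{\prod_{s=1}^{k}\bigl(2\pi(m-n-n(s))-2t\bigr)\,2\pi(n(s)-m-n)},
\]
with $n_j\ge1$, $1\le n(1)<\cdots<n(k)\le m+n-1$ (and the left side is $0$ when $m+n\le0$). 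Substituting $u_s:=m+n-n(s)$, so $m+n=u_0>u_1>\cdots>u_k\ge1$, the denominator becomes $\pm\prod_{s=1}^{k}2\pi u_s|2\pi(u_s-2n)-2t|$ and the numerator $q_{u_0-u_1}q_{u_1-u_2}\cdots q_{u_{k-1}-u_k}q_{u_k}$, so it remains to bound $8n\pi t$ times $\sum_{k\ge1}\sum_{u_1>\cdots>u_k}|q_{u_0-u_1}|\cdots|q_{u_k}|\big/\prod_s2\pi u_s|2\pi(u_s-2n)-2t|$, uniformly in $m$ and in $t\in(0,\tfrac\pi2)$.

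Next, two elementary facts drive the estimate. For $t\in(0,\tfrac\pi2)$ one has $|2\pi(u_s-2n)-2t|\ge\tfrac{\pi}{4n}u_s$ whenever $u_s\ne2n$, hence $2\pi u_s|2\pi(u_s-2n)-2t|\ge\tfrac{\pi^2}{2n}u_s^2$ there; and when $u_s=2n$ this factor equals $8\pi nt$. Since the $u_s$ are distinct, at most one of them can be $2n$, so $8n\pi t$ applied to a chain hitting $2n$ cancels the unique small factor ($8n\pi t\cdot(8\pi nt)^{-1}=1$, which is the identity behind (41)), while on a chain avoiding $2n$ it is simply $<4n\pi^2$. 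I would then split the sum into the chains avoiding $2n$ and those passing through $2n$ (splitting the latter at $u_{s_0}=2n$ into an ``upper'' stretch with interior nodes $>2n$ and a ``lower'' stretch with at most $2n-1$ interior nodes drawn from $\{1,\dots,2n-1\}$). In each stretch, bound every $|q_\cdot|$ by $M:=\sup_n|q_n|\le\|q\|_{L_1}$, use the quadratic lower bound above, and apply $\sum_{u_1>\cdots>u_j\ge1}\prod_s u_s^{-2}\le\tfrac1{j!}\bigl(\sum_{u\ge1}u^{-2}\bigr)^{j}=\zeta(2)^j/j!$ — the distinctness of the $u_s$ producing the decisive factor $1/j!$. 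This makes the length‑$k$ contribution $\le\mathrm{const}(n)\,M^{k+1}c_n^{\,k}/k!$ for a suitable $c_n$, whose sum over $k$ converges (to $\mathrm{const}(n)\,Me^{c_nM}$); adding the $k=0$ term $8n\pi t|q_{m+n}|\le4n\pi^2M$ produces a constant $K$ depending only on $n$ and $q$ — and likewise $L$ for $t\in[\tfrac\pi2,\pi)$, where now $2\pi(u_s-2n)-2t=2\pi(u_s-2n-1)+2(\pi-t)$ is small only at $u_s=2n+1$, with the factor $4\pi(2n+1)(\pi-t)$ cancelled by the weight because $4(2n+1)\pi(\pi-t)\,d_{2n+1,-n}(-t)=-1$.

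The hard part is precisely the uniformity in $m$. Bounding each $q_{n_j}$ by $M$ and each denominator factor only by a fixed constant yields a geometric series $\sum_k(cM)^k$ that diverges once $\|q\|$ is not small; what rescues the argument is the combination of (i) the strict monotonicity of $s\mapsto n(s)$ (equivalently of $s\mapsto u_s$), which guarantees the weight has to cancel only a single denominator factor, and (ii) extracting the quadratic bound $\ge\mathrm{const}\cdot u_s^2$ and then the factorial $1/k!$ from the distinctness of the $u_s$, which converts the divergent geometric series into a convergent one. (It is worth noting that only $M<\infty$, i.e. $q\in L_1[0,1]$, is needed here, not any rate of decay of $q_n$.)
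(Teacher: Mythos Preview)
Your argument is correct but proceeds quite differently from the paper. You expand $(q\Psi_{-n,-t},e^{i(2\pi m-t)x})$ completely via (16) and (19), reparametrize by $u_s=m+n-n(s)$, and exploit the strict monotonicity $u_1>\cdots>u_k$ twice: once to see that the weight $8n\pi t$ has to cancel at most one resonant denominator, and once to extract the factor $1/k!$ that turns a geometric tail into an exponential one. This yields an explicit bound $K=K(n,M)$ depending only on $n$ and $M=\sup|q_j|$. The paper instead runs a short bootstrap: it sets $C(t)=\max_m|(q\Psi_{-n,-t},e^{i(2\pi m-t)x})|$ (finite by Riemann--Lebesgue), splits the convolution identity (22) at the maximizing index into a near part ($|k_0-m|\le|n|+l$) and a far part, feeds the far part back through (16) to get $\tfrac12 C(t)$, and bounds the near part by a finite sum of the quantities $8n\pi t|c_{p,-n}(-t)|$, each of which is bounded on $(0,\tfrac\pi2]$ by Lemma~1. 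The paper's route is softer and shorter (no combinatorics, no factorial), while yours is more explicit and makes the dependence on $\|q\|_{L_1}$ transparent; both hinge on the same structural fact that only the single index $u_s=2n$ (respectively $2n+1$) produces a small denominator.
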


\begin{proof}
Let us prove (56) for $t\in(0,\frac{\pi}{2}).$ Since $(q(x)\Psi_{-n,-t}%
(x),e^{i(2\pi m-t)x})$ tends to zero as $\left\vert m\right\vert
\rightarrow\infty,$ there exists a constant $C(t)$ and integer $k_{0}(t)$ such
that
\begin{equation}
\underset{m\in\mathbb{Z}}{max}\left\vert (q(x)\Psi_{-n,-t}(x),e^{i(2\pi
m-t)x})\right\vert =\left\vert (q(x)\Psi_{-n,-t}(x),e^{i(2\pi k_{0}%
-t)x})\right\vert =C(t).
\end{equation}
Let $l$ be an integer such that
\begin{equation}
\sum_{k\geq l}\frac{1}{k^{2}}<\frac{1}{2M},
\end{equation}
where $M$ is defined in (26). Using (58), (22) and then (30), (59) we obtain
\begin{equation}
C(t)=\left\vert (q\Psi_{-n,-t},e^{i(2\pi k_{0}-t)x})\right\vert \leq\left\vert
\sum_{m:\mid k_{0}-m\mid\leq\mid n\mid+l}q_{m}(\Psi_{-n,-t},e^{i(2\pi
(k_{0}-m)-t)x})\right\vert
\end{equation}%
\[
+\mid\sum_{m:\mid k_{0}-m\mid>\mid n\mid+l}\frac{q_{m}(q(x)\Psi_{N,t}%
(x),e^{i(2\pi(k_{0}-m)+t)x})}{(-2\pi n-t)^{2}-(2\pi(k_{0}-m)-t)^{2}}%
\mid<S+\frac{C(t)}{2},
\]
where
\begin{equation}
S=M\sum_{p:\mid p\mid\leq2\mid n\mid+l}\mid c_{-n,p}(-t)\mid.
\end{equation}
On the other hand, from (19) and (20) one can readily see that there exists a
constant $c$ such that $8n\pi t\left\vert c_{-n,p}(-t)\right\vert <c$ for all
$p$ with $\mid p\mid\leq2\mid n\mid+l.$ Moreover the number of the summands in
(61) is less that $2(2\mid n\mid+l+1).$ Therefore $8n\pi t\mid S\mid<2M(2\mid
n\mid+l+1)c.$ This inequality with (60) implies that
\[
8n\pi tC(t)<2M(2\mid n\mid+l+1)c+\frac{8n\pi tC(t)}{2},
\]
that is, (56) holds for $K=4M(2\mid n\mid+l+1)c$. In the same way we prove (57).
\end{proof}

\begin{theorem}
If $q\in L_{1}^{+}[0,1]$ then (38) and (39) hold for $n\geq1$ and $n\geq0$ respectively.
\end{theorem}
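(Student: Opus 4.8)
\noindent The approach is to deduce (38) and (39) from Lemmas 1 and 2: expand $\Psi_{-n,-t}$ in Fourier series, pass to the limit term by term with the help of Lemma 1, and justify the interchange of limit and summation by dominated convergence, the needed majorant being furnished by Lemma 2. I describe the plan for (38) ($t\to 0$, $n\geq 1$); the case (39) is entirely parallel. Using the Fourier decomposition (33) of $\Psi_{-n,-t}$ together with (34), and writing $p=2n+j$ (so that $c_{p,-n}(-t)=0$ for $p<0$ forces $j\geq-2n$), I would start from
\[
8n\pi t\,\Psi_{-n,-t}(x)=\sum_{j\geq-2n}\bigl(8n\pi t\,c_{2n+j,-n}(-t)\bigr)e^{i(2\pi(n+j)-t)x}.
\]
By Lemma 1, formula (35), each coefficient satisfies $8n\pi t\,c_{2n+j,-n}(-t)\to s_{2n}c_{j,n}(0)$ as $t\to0$, while $e^{i(2\pi(n+j)-t)x}\to e^{i2\pi(n+j)x}$ uniformly on $[0,1]$. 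Since $c_{j,n}(0)=0$ for $j<0$ and $c_{0,n}(0)=1$, the candidate limit of the series is $s_{2n}\sum_{j}c_{j,n}(0)e^{i2\pi(n+j)x}=s_{2n}\Psi_{n,0}(x)$, with $\Psi_{n,0}$ read off from (33)--(34) at $t=0$.

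The decisive point is a $t$-independent summable majorant for the coefficients. Multiplying $-\Psi_{-n,-t}^{\prime\prime}+q\Psi_{-n,-t}=(2\pi n+t)^{2}\Psi_{-n,-t}$ by $e^{i(2\pi(n+j)-t)x}$ — the analogue of (16) for $\Psi_{-n,-t}$ — yields
\[
4\pi(2n+j)(t-\pi j)\,c_{2n+j,-n}(-t)=(q\Psi_{-n,-t},e^{i(2\pi(n+j)-t)x}),
\]
so for $j\neq0$ one has $8n\pi t\,c_{2n+j,-n}(-t)=\dfrac{2nt}{(2n+j)(t-\pi j)}(q\Psi_{-n,-t},e^{i(2\pi(n+j)-t)x})$. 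For $|j|>4n$ the prefactor is $O(1/j^{2})$ uniformly in $t\in(0,\tfrac{\pi}{2})$, and by (56) the inner product is bounded by $K/(8n\pi t)$; the factors $t$ and $1/t$ cancel, giving $|8n\pi t\,c_{2n+j,-n}(-t)|\leq C/j^{2}$ there, uniformly in $t$. For the finitely many remaining indices $-2n\leq j\leq 4n$ one invokes the uniform bound $8n\pi t\,|c_{2n+j,-n}(-t)|<c$ already established inside the proof of Lemma 2 (including the index $j=0$, i.e. $p=2n$). Hence $\sum_{j\geq-2n}\sup_{t\in(0,\pi/2)}|8n\pi t\,c_{2n+j,-n}(-t)|<\infty$.

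With this majorant, the dominated convergence theorem for series applies to the displayed expansion; combined with the uniform convergence of the exponentials it gives $8n\pi t\,\Psi_{-n,-t}(x)\to s_{2n}\Psi_{n,0}(x)$, uniformly for $x\in[0,1]$ and in particular pointwise, which is (38). For (39) one repeats every step with $t\to\pi$: expand $4(2n+1)(t-\pi)\pi\,\Psi_{-n,-t}$, set $p=2n+1+j$, use (36) for the termwise limits and the same (16)-type identity together with (57) for the majorant, the cancelling factors now being $(t-\pi)$ and $1/(\pi-t)$, and note that $e^{i(2\pi(n+1+j)-t)x}\to e^{i(2\pi(n+j)+\pi)x}$ as $t\to\pi$.

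I expect the only genuine obstacle to be exactly this uniform-in-$t$ domination. The estimate of Lemma 2 degenerates like $1/t$ (respectively $1/(\pi-t)$) as $t\to0$ (respectively $t\to\pi$), so one must see that this degeneration is precisely offset by the factor $t$ (respectively $t-\pi$) produced by the denominator $4\pi(2n+j)(t-\pi j)$ in the (16)-type identity, which is what leaves a summable $O(1/j^{2})$ tail uniform in $t$; the low-index terms — notably $j=0$, where $t-\pi j$ provides no size gain — then have to be absorbed separately by the cruder bound contained in the proof of Lemma 2.
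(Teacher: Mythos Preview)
Your proposal is correct and follows essentially the same route as the paper: combine the identity (16) with Lemma~2 to obtain a $t$-uniform summable majorant for the Fourier coefficients of $8n\pi t\,\Psi_{-n,-t}$, then pass to the limit term by term via Lemma~1 (formula (35)). The paper packages the majorant as $8n\pi t\,|c_{p,-n}(-t)|<K\,|d_{p,-n}(t)|$ (its (62)) and invokes uniform convergence of (18) directly; your splitting into large and small $|j|$, with special attention to $j=0$ (where $d_{2n,-n}(-t)\sim 1/t$ and (62) alone is not uniform), is the same argument made slightly more explicit.
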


\begin{proof}
From (30), (16) and Lemma 2 it follows that
\begin{equation}
8n\pi t\left\vert c_{p,-n}(-t)\right\vert =\mid8n\pi td_{p,-n}(-t)(q\Psi
_{-n,-t},e^{i(2\pi(-n+p)-\overline{t})x})\mid<K\mid d_{p,-n}(t)\mid
\end{equation}
for $t\in(0,\frac{\pi}{2}).$ Therefore the series \textit{ }%
\begin{equation}
\text{ }8n\pi t\Psi_{-n,-t}(x)=8n\pi te^{i(-2\pi n-t)x}+\sum_{p\in\mathbb{N}%
}8n\pi tc_{p,-n}(-t)e^{i(2\pi(-n+p)-t)x})
\end{equation}
(see (18)) converges uniformly with respect to $x\in\lbrack0,1]$ and
$t\in(0,\frac{\pi}{2}].$ Thus, in (63) letting $t$ tend to zero and using
equality (35) we get the proof of (38). To prove (39) instead of (35) and (56)
we use (36) and (57) and repeat the proof of (38).
\end{proof}

From (38) and (39) we define the norming numbers $s_{n}$ for $n=1,2,...$By (37)%

\begin{equation}
s_{1}=q_{1},\text{ }s_{2}=q_{2}+\frac{q_{1}^{2}}{(2\pi)^{2}},\text{ }%
s_{3}=q_{3}+\frac{q_{1}q_{2}}{(2\pi)^{2}}+\frac{q_{1}^{3}}{4(2\pi)^{4}},....
\end{equation}
Thus if the norming numbers $s_{n}$ for $n=1,2,...$ are given then one can
define recurrently \ %

\begin{equation}
q_{1}=s_{1},\text{ }q_{2}=s_{2}-\frac{s_{1}^{2}}{(2\pi)^{2}},\text{ }%
q_{3}=s_{3}-\frac{s_{1}s_{2}}{(2\pi)^{2}}+\frac{3s_{1}^{3}}{4(2\pi)^{4}},....
\end{equation}

Now we are ready to prove the main result of this section. Let $S$ be the set
of all sequences $\{s_{n}\}$ for which there exists $s\in L_{1}[0,1]$ with
$(s(x),e^{i2\pi nx})=s_{n}$ for all $n=1,2,....$

\begin{theorem}
For every $q\in L_{1}^{+}[0,1]$ the sequence $\{s_{n}\}$ of norming numbers is
an element of $S.$ Conversely, for any sequence $\{s_{n}\}$ from $S$ there
exists unique $q\in L_{1}^{+}[0,1]$ such that the sequence of the norming
numbers of $L(q)$ coincides with $\{s_{n}\}$ if and only if the solutions
$q_{1},q_{2},...$ of (37) is a bounded sequence.
\end{theorem}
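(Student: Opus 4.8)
The plan is to split the statement into its two halves and handle them via the explicit relation between the norming numbers $\{s_n\}$ and the Fourier coefficients $\{q_n\}$ given in (37).

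\textbf{The forward direction.} Suppose $q\in L_1^+[0,1]$. I would first observe that (37) expresses each $s_n$ as a sum $q_n + \sum_{k=1}^{n-1} S_k(n)$, where each $S_k(n)$ is a finite sum of products $q_{n_1}\cdots q_{n_k} q_{n-n(k)}$ divided by a product of factors $(2\pi n(s)) \cdot 2\pi(n-n(s))$ with $1\le n(s)\le n-1$. The key point is that the smallest denominator factor in absolute value is bounded below by $(2\pi)^2$ (since $n(s)\ge 1$ and $n-n(s)\ge 1$), so $|S_k(n)| \le C^k \binom{n-1}{k} M^{k+1}/(2\pi)^{2k}$ or a similar estimate, where $M=\sup_n|q_n|$ is finite by Riemann--Lebesgue; summing over $k$ one gets a crude bound like $|s_n| \le M(1 + M/(2\pi)^2)^{n-1}$. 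This alone does not put $\{s_n\}$ in $S$. The cleaner route is to recognize that the map $q\mapsto s$ is, at the level of Fourier series, a \emph{composition/substitution} of the formal power series $\sum q_n z^n$ (in the variable $z=e^{2\pi i x}$) with itself in a precise way dictated by the structure of (37); I would verify that $s$ is exactly the Fourier series of an explicitly constructed $L_1^+[0,1]$ function --- for instance by checking that $\{s_n\}$ is the sequence of norming numbers coming from (38)--(39) and that those are, by Theorem 3 together with Lemma 1, limits of genuine Fourier coefficients of $L_1$ functions (the functions $8n\pi t\,\Psi_{-n,-t}$ have uniformly controlled Fourier coefficients by Lemma 2). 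Concretely: Lemma 2 gives $\sup_m |(q\Psi_{-n,-t},e^{i(2\pi m - t)x})| = O(1/(nt))$, hence the limiting object defining $s$ lies in $L_1[0,1]$ with $s_n = (s,e^{i2\pi nx})$, which is the definition of membership in $S$. I expect the bookkeeping to show $s$ is the function whose Fourier coefficients are given by the closed form, and thus $\{s_n\}\in S$.

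\textbf{The converse direction.} Now start from $\{s_n\}\in S$, so there is $s\in L_1[0,1]$ with $(s,e^{i2\pi nx})=s_n$. Solving (37) recurrently for the $q_n$ --- exactly as displayed in (67) following (66) --- gives a \emph{unique} candidate sequence $\{q_n\}$, with $q_n = s_n - (\text{polynomial in } s_1,\dots,s_{n-1}$ with coefficients involving powers of $1/(2\pi)^2)$. The question is precisely whether this candidate sequence is the Fourier coefficient sequence of some $q\in L_1^+[0,1]$; and the only obstruction, once one knows the $q_n$ come from solving a triangular system, is integrability, equivalently (given the structure) boundedness of $\{q_n\}$. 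So I would argue: (i) if $\{q_n\}$ is bounded, say by $B$, then reversing the estimate from the forward direction, the series $\sum_k S_k(n)$ converges and one can reconstruct $q\in L_1^+[0,1]$ --- in fact one shows $\sum |q_n|$ or at least $q\in L_1$ follows, because the recursion expressing $q_n$ in terms of $s_1,\dots,s_n$ can be inverted to re-express $s$ as a convergent substitution series in $q$, and boundedness plus the $(2\pi)^{-2k}$ decay of the denominators makes the relevant series absolutely convergent, yielding $q\in L_1[0,1]$ (indeed one gets more, that the associated series defines an $L_1$ function with the prescribed Fourier coefficients supported on positive frequencies); and then by the forward direction the norming numbers of $L(q)$ are $\{s_n\}$. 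Conversely (ii) if such a $q\in L_1^+[0,1]$ exists whose norming numbers are $\{s_n\}$, then by Riemann--Lebesgue its Fourier coefficients $q_n\to 0$, hence $\{q_n\}$ is bounded; and by uniqueness of the solution of the triangular system (37), these $q_n$ must coincide with the recurrently defined solutions. This establishes the ``if and only if''.

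\textbf{Main obstacle.} The delicate point is part (i) of the converse: showing that \emph{boundedness} of the formally reconstructed $\{q_n\}$ is enough to guarantee $q\in L_1[0,1]$ (not merely, say, $q$ a distribution or an $\ell^\infty$ symbol). This requires controlling the inverse substitution --- writing $s$ as a sum over $k$ of $k$-fold convolution-type terms in $q$ --- and summing the resulting series in $L_1$ norm. The favorable structure is that every term carries a factor $\prod_{s}\big((2\pi n(s))2\pi(n-n(s))\big)^{-1}$ with all factors $\ge (2\pi)^2$ in modulus, which should beat the combinatorial growth of the number of terms once $\sup|q_n|<\infty$; making this rigorous (e.g. by a generating-function/majorant argument, or by directly estimating the $L_1$ norms of the iterated kernels) is the technical heart of the proof. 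Uniqueness throughout is immediate from the triangular (lower-unitriangular after rescaling) nature of system (37): $q_n$ is determined by $s_1,\dots,s_n$ and nothing else.
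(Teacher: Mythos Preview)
Your proposal misses the single estimate that makes the paper's argument work. You bound each denominator factor $(2\pi n(s))\cdot 2\pi(n-n(s))$ below by $(2\pi)^2$, obtain a bound on $|S_k(n)|$ that grows exponentially in $n$, correctly note that this is useless, and then retreat to a vague appeal to formal power-series substitution and to Lemma~2. The paper instead keeps the full denominator and uses the elementary but decisive inequality
\[
\sum_{j=1}^{n-1}\frac{1}{j(n-j)}<\frac{2(1+\ln n)}{n},\qquad n\ge 4,
\]
which, after bounding $|S_k(n)|\le \dfrac{M^{k+1}}{(2\pi)^{2k}}\Bigl(\sum_{j=1}^{n-1}\tfrac{1}{j(n-j)}\Bigr)^{k}$ and summing the resulting geometric series in $k$, yields
\[
s_n-q_n=\sum_{k=1}^{n-1}S_k(n)=O\!\left(\frac{\ln n}{n}\right).
\]
This one line settles both directions. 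For the forward direction, $\{s_n-q_n\}\in\ell^2$, so there is $p\in L_2[0,1]$ with $(p,e^{i2\pi nx})=s_n-q_n$ for $n\ge 1$; then $s:=p+q\in L_1[0,1]$ has $(s,e^{i2\pi nx})=s_n$, hence $\{s_n\}\in S$. For the converse, if the recursively defined $\{q_n\}$ is bounded by some $C$, repeat the same estimate with $C$ in place of $M$ to get again $s_n-q_n=O(n^{-1}\ln n)$; since $\{s_n\}\in S$ by hypothesis and $\{s_n-q_n\}$ is an $\ell^2$ (hence $L_2$) Fourier sequence, $\{q_n\}$ is the Fourier sequence of an $L_1$ function, necessarily in $L_1^+[0,1]$. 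Uniqueness follows from the triangular structure of (37), as you observed, and the ``only if'' half (Riemann--Lebesgue) you also have correctly.

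Thus what you flagged as the ``main obstacle'' --- passing from boundedness of $\{q_n\}$ to $q\in L_1$ --- is not an obstacle at all once the $O(n^{-1}\ln n)$ estimate is in hand: you never need to sum an inverse substitution in $L_1$ norm or run a majorant argument, only to observe that $q$ differs from the given $L_1$ function $s$ by an $L_2$ correction. Your proposed detour through Lemma~2 for the forward direction is likewise unnecessary and, as written, does not actually produce an $L_1$ function whose Fourier coefficients are the $s_n$.
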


\begin{proof}
Since $\mid q_{n}\mid\leq M$ for all $n,$ where $M$ is defined in (26), from
(37) we obtain
\begin{equation}
\mid S_{1}(n)\mid\leq\frac{M^{2}}{(2\pi)^{2}}\sum_{k=1}^{n-1}\frac{1}{k(n-k)}.
\end{equation}
In the same way we get
\begin{equation}
\mid S_{k}(n)\mid\leq\frac{M^{k+1}}{(2\pi)^{2k}}\left(  \sum_{k=1}^{n-1}%
\frac{1}{k(n-k)}\right)  ^{k}.
\end{equation}
Now using the obvious inequality
\begin{equation}
\sum_{k=1}^{n-1}\frac{1}{k(n-k)}<\frac{2(1+\ln n)}{n},\text{ }\forall n\geq4
\end{equation}
from (37) we obtain that
\begin{equation}
s_{n}-q_{n}=O(n^{-1}\ln n).
\end{equation}
Therefore there exists $p\in L_{2}[0,1]$ such that $(p(x),e^{i2\pi nx}%
)=s_{n}-q_{n}$ for all $n=1,2,....$ Then the function $s(x)=p(x)+q(x)$ belongs
to $L_{1}[0,1]$ and $(s(x),e^{i2\pi nx})=s_{n}$ for all $n=1,2,....,$ that is,
$\{s_{n}\}\in S.$

Now suppose that $\{s_{n}\}\in S$ and the solutions $q_{1},q_{2},...$ of (37)
is a bounded sequence. Then there exists a constant $C$ such that $\mid
q_{n}\mid\leq C$ for $n=1,2,...$ Instead of $M$ using $C$ and repeating the
proof of (69) we see that $\{q_{n}\}\in S.$ Therefore there exists unique
$q\in L_{1}^{+}[0,1]$ such that the sequence of the norming numbers of $L(q)$
coincides with $\{s_{n}\}.$
\end{proof}

It remains to find the conditions on the sequence $\{s_{n}\}$ of norming
numbers such that the sequence $\{q_{n}\}$ defined from (37) is bounded. Below
we present an example by using the following obvious inequality
\begin{equation}
\sum_{k=1}^{n-1}\frac{1}{k(n-k)}\leq1,\text{ }\forall n>1.
\end{equation}
which follows from (68) for $n>5$ and can be verified by calculations for
$n\leq5.$

\begin{proposition}
If the sequence $\{s_{n}\}$ of norming numbers satisfies the inequality%
\begin{equation}
\mid s_{n}\mid\leq2\pi-\frac{2\pi}{2\pi-1},\text{ }\forall n=1,2,...
\end{equation}
then for the sequence $\{q_{n}\}$ defined from (37) the following estimations
hold%
\begin{equation}
\mid q_{m}\mid\leq2\pi,\text{ }\forall m=1,2,...
\end{equation}

\end{proposition}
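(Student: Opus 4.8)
The plan is to establish (72) by strong induction on $m$, treating (37) as a recursion that determines $q_m$ from $s_m$ together with the coefficients $q_j$ with $j<m$. Solving (37) for $q_m$ gives $q_m=s_m-\sum_{k=1}^{m-1}S_k(m)$, so it suffices to show that $\bigl|\sum_{k=1}^{m-1}S_k(m)\bigr|\le\frac{2\pi}{2\pi-1}$ under the inductive assumption $|q_j|\le 2\pi$ for all $j<m$; combining this with the hypothesis (71) then yields $|q_m|\le|s_m|+\frac{2\pi}{2\pi-1}\le\bigl(2\pi-\frac{2\pi}{2\pi-1}\bigr)+\frac{2\pi}{2\pi-1}=2\pi$. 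The base case $m=1$ is immediate, since $q_1=s_1$ and $|s_1|\le 2\pi-\frac{2\pi}{2\pi-1}<2\pi$.

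For the inductive step the crucial observation is that every Fourier coefficient of $q$ occurring in $S_k(m)$ carries an index strictly smaller than $m$. Indeed, by the vanishing condition (7) the summation defining $S_k(m)$ in (37) is effectively restricted to $n_s\ge 1$ for all $s$ and to $m-n(k)\ge 1$; hence $1\le n(1)<n(2)<\cdots<n(k)\le m-1$, each $n_s$ and the index $m-n(k)$ lie in $\{1,2,\dots,m-1\}$, and the denominators $2\pi n(s)\cdot 2\pi(m-n(s))$ are all nonzero. Consequently the inductive bound $|q_j|\le 2\pi$ applies to every factor in $S_k(m)$, and repeating verbatim the computation that produced (67), but with the constant $M$ replaced by $2\pi$, gives $|S_k(m)|\le\frac{(2\pi)^{k+1}}{(2\pi)^{2k}}\bigl(\sum_{j=1}^{m-1}\frac{1}{j(m-j)}\bigr)^k=2\pi(2\pi)^{-k}\bigl(\sum_{j=1}^{m-1}\frac{1}{j(m-j)}\bigr)^k$ for $1\le k\le m-1$.

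Now I would invoke (70), which gives $\sum_{j=1}^{m-1}\frac{1}{j(m-j)}\le 1$ for $m>1$, so that $|S_k(m)|\le 2\pi(2\pi)^{-k}$. Summing the resulting geometric series, $\bigl|\sum_{k=1}^{m-1}S_k(m)\bigr|\le\sum_{k=1}^{m-1}2\pi(2\pi)^{-k}\le 2\pi\sum_{k=1}^{\infty}(2\pi)^{-k}=\frac{2\pi}{2\pi-1}$, which is exactly the bound required in the first paragraph, so the induction closes.

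There is no essential difficulty here. The two points that deserve care are: (i) verifying that the strict ordering $n(1)<\cdots<n(k)\le m-1$ (forced by $n_s\ge 1$ and $m-n(k)\ge 1$) really does place every $q$-index occurring in $S_k(m)$ below $m$, which is what makes the strong induction legitimate; and (ii) recognizing that the constant $2\pi-\frac{2\pi}{2\pi-1}$ in (71) is precisely the room left for $|s_n|$ once the geometric tail $\sum_{k\ge 1}2\pi(2\pi)^{-k}=\frac{2\pi}{2\pi-1}$ has been subtracted from $2\pi$ — so the constant in the hypothesis is sharp for this method. Everything else is a routine re-run of the estimates (66)--(67) with $M$ replaced by the inductive bound $2\pi$.
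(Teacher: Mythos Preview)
Your proof is correct and follows essentially the same approach as the paper's: strong induction on $m$, observing that every index appearing in $S_k(m)$ is strictly less than $m$, replacing $M$ by $2\pi$ in the estimate (67), invoking (70), and summing the resulting geometric series to bound $\sum_k|S_k(m)|$ by $\frac{2\pi}{2\pi-1}$. The only cosmetic difference is that the paper checks the base cases $m=1,2$ from (65) while you start from $m=1$ alone, which is immaterial.
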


\begin{proof}
Let us prove (72) \ by induction. It follows from (65) that (72) holds for
$m=1,2.$ Assume that (72) holds for $m<n,$ where $n>2.$ Then $\mid q_{m}%
\mid\leq2\pi$ for $m<n.$ The indices $n_{1},n_{2},...,n_{k},n-n_{k}$ taking
part in the expressions of $S_{k}(n)$ for $k=1,2,...,n-1$ (see (37)) less that
$n,$ since they are positive numbers and their total sum is $n.$ Therefore by
assumption of the induction the Fourier coefficients taking part in those
expressions satisfy (72). \ Thus in (67) instead of $M$ taking $2\pi$ and then
using (70) we get
\[
\sum_{k=1}^{n-1}\mid S_{k}(n)\mid\leq\sum_{k=1}^{n-1}\frac{(2\pi)^{k+1}}%
{(2\pi)^{2k}}<\frac{2\pi}{2\pi-1}.
\]
This with (37) and (71) implies that $\mid q_{n}\mid\leq\mid s_{n}\mid
+\frac{2\pi}{2\pi-1}\leq2\pi$
\end{proof}

Theorem 4 with Proposition 1 implies

\begin{corollary}
For any sequence $\{s_{n}\}\in S$ satisfying (71) there exists unique $q\in
L_{1}^{+}[0,1]$ such that the sequence of the norming numbers of $L(q)$
coincides with $\{s_{n}\}.$
\end{corollary}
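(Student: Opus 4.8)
The plan is to read off the corollary from the converse half of Theorem~4, using Proposition~1 to discharge its quantitative hypothesis. Theorem~4 already reduces the inverse problem to a single arithmetic condition: for a given $\{s_{n}\}\in S$, there exists a (necessarily unique) $q\in L_{1}^{+}[0,1]$ whose sequence of norming numbers equals $\{s_{n}\}$ if and only if the numbers $q_{1},q_{2},\dots$ obtained by solving the triangular recursion (37) form a bounded sequence. Thus the whole content of Corollary~2 is the claim that hypothesis (71) forces this boundedness, and it is precisely this that Proposition~1 supplies.

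Concretely, first I would fix $\{s_{n}\}\in S$ satisfying $\mid s_{n}\mid\le 2\pi-\frac{2\pi}{2\pi-1}$ for all $n=1,2,\dots$, and let $\{q_{n}\}$ be the sequence determined from it by (37) (equivalently, the recursion displayed in (66)). Then I would apply Proposition~1 directly to conclude $\mid q_{m}\mid\le 2\pi$ for every $m=1,2,\dots$, so in particular $\{q_{n}\}$ is bounded. Here the only point requiring care is a bookkeeping check, namely that the $\{q_{n}\}$ appearing in Proposition~1 and the $\{q_{n}\}$ appearing in the boundedness clause of Theorem~4 are literally the same object; they are, since both are defined as the solution of (37).

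Finally, I would invoke the converse statement of Theorem~4 with this bounded sequence $\{q_{n}\}$: its boundedness is exactly the stated necessary-and-sufficient condition, so Theorem~4 guarantees a unique $q\in L_{1}^{+}[0,1]$ for which the sequence of norming numbers of $L(q)$ coincides with $\{s_{n}\}$, which is the assertion of the corollary.

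I do not expect any real obstacle: all the analytic work (the a~priori bounds (67)--(70) feeding Theorem~4, and the inductive estimate of Proposition~1) has already been carried out, and what remains is the purely logical composition ``(71) $\Rightarrow$ $\{q_{n}\}$ bounded $\Rightarrow$ existence and uniqueness of $q$.'' If anything, the mild subtlety is simply to state the deduction cleanly so that the role of the ``if and only if'' in Theorem~4 is transparent — only the ``if'' direction is needed here.
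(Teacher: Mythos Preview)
Your proposal is correct and mirrors the paper exactly: the paper simply states that Corollary~2 follows from Theorem~4 together with Proposition~1, and your argument spells out precisely this deduction, using Proposition~1 to secure the boundedness of $\{q_{n}\}$ and then invoking the ``if'' direction of Theorem~4.
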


\end{document}